\font\smallit=cmti10
\font\smalltt=cmtt10
\renewcommand\section{\@startsection {section}{1}{\z@}
{-30pt \@plus -1ex \@minus -.2ex}
{2.3ex \@plus.2ex}
{\normalfont\normalsize\bfseries\boldmath}}
\renewcommand\subsection{\@startsection{subsection}{2}{\z@}
{-3.25ex\@plus -1ex \@minus -.2ex}
{1.5ex \@plus .2ex}
{\normalfont\normalsize\bfseries\boldmath}}
\renewcommand{\@seccntformat}[1]{\csname the#1\endcsname. }
\newtheorem{Theorem}{Theorem}
\newtheorem{Lemma}{Lemma}
\newtheorem{Corollary}{Corollary}
\theoremstyle{Definition}
\newtheorem{Definition}{Definition}
\newtheorem{Remark}{Remark}
\newtheorem{Example}{Example}
\begin{document}

\begin{center}
\uppercase{\bf \boldmath Locally Integer Polynomial Functions}
\vskip 20pt
{\bf Alexander Borisov}\\
{\smallit Department of Mathematics and Statistics, Binghamton University, Binghamton, New York, USA}\\
{\tt alexanderborisov.math@gmail.com}\\ 
\end{center}
\vskip 20pt
\vskip 30pt

\centerline{\bf Abstract}
\noindent
The goal of this note is to bring attention to an interesting family of rings: the rings of $\mathbb Z$-valued functions on $\mathbb Z$ and, more generally, infinite subsets of $\mathbb Z$ whose restrictions to all finite sets are given by polynomials with integer coefficients. Our interest in these functions was inspired by the work of Sayak Sengupta on iterations of integer polynomials, but they appear to be of independent interest. In particular, they enjoy some properties reminiscent of the properties of complex analytic functions, including forming a sheaf in the cofinite and density one topologies. 

\pagestyle{myheadings}
\markright{\smalltt INTEGERS: 24 (2024)\hfill}
\thispagestyle{empty}
\baselineskip=12.875pt
\vskip 30pt

\section{Introduction} 
Let $f\colon \mathbb Z \to \mathbb Z$ be a function. By a classical polynomial interpolation theorem, for any finite set $X\subset \mathbb Z$ there exists a polynomial $p(x) \in \mathbb Q[x]$ such that $p(x)=f(x)$ for all $x\in X$. Moreover, if we require $\deg p(x) < |X|,$ this $p(x)$ is unique; we will denote it by $f_X(x)$. It is well known that in general one cannot guarantee that $f_X\in \mathbb Z[x]$: $f(x)=\frac{x(x+1)}{2}$ provides a simple, perhaps the simplest, counterexample.  If we require that for all finite $X$ the polynomial $f_X$ does belong to $\mathbb Z[x]$, we get infinitely many congruence conditions on the values of $f$. To begin with,  for all integers $n$ and $m$  by considering $X=\{n,m\}$  we get that $n-m$ must divide $f(n)-f(m)$. More restrictive, but also more complicated, conditions appear when we consider $X$ with $|X|\geq 3$. For the lack of a better name, we will call such functions locally integer polynomial functions or LIP functions. Clearly, these functions form a ring, that we will denote by $L\mathbb Z[x]$ or $LIP(\mathbb Z)$. More generally, replacing $\mathbb Z$ by its arbitrary infinite subset $U$, we get the ring $LIP(U)$ of locally integer polynomial functions on $U$. 

Our initial motivation to study these objects comes from the paper of Sayak Sengupta \cite{Sengupta2} on iterations of integer polynomials. Some of the main results of that paper concern with partial classification of orbits of integer polynomials starting at a fixed integer and eventually containing $0$. In particular, he found the maximal length of such sequences, depending on the starting integer, together with some other finiteness and structural results. It is easy to see that his results can be extended to iterations of LIP functions, as his methods only involve integer polynomials and finite integer sequences. To preempt any confusion, we must point out that our use of the word ``locally'' is different from  Sengupta's use of it in \cite{Sengupta1}, where it means ``modulo primes''.

It turned out that LIP functions on $\mathbb Z$ and, especially, on infinite subsets of $\mathbb Z$ are rich and interesting objects. In particular, there are some mysterious similarities between these rings and the rings of complex-analytic functions on domains in $\mathbb C$. As such, this theory has some potential to fill in an interesting gap in our understanding of the integers. There have been many attempts to develop a theory for the integers that would resemble the usual topology on $\mathbb C,$ viewed as the set of maximal ideals on $\mathbb C[x]$, including, in particular, the theory of Berkovich spaces and various results around the technically non-existent ``field of one element'' (e.g. \cite {BPR, Berkovich, JaiungJun, Lorscheid}). We hope that LIP functions become an important addition to these efforts, especially if combined with some natural topologies on $\mathbb N$, like Golomb and Kirch topologies (\cite{Clarketal, Golomb, Kirch, LMS}).  Golomb and Kirch topologies have been around for a long time, but are relatively unknown to number theorists. They are Hausdorff and connected, and are defined using certain arithmetic progressions. As we will see in Sections 4 and 5, arithmetic progressions naturally appear in the theory of LIP functions.

In this direction we prove that LIP functions form a sheaf, and not just a presheaf, for cofinite and density one topologies on $\mathbb Z$ and $\mathbb N$. For Golomb and Kirch topologies, the situation is more complicated, because they have non-connected open sets, and for a non-connected open set the sheafification of the LIP presheaf must be a direct product of the sheafifications on the connected components. We consider the Kirch topology and present some partial results and open questions regarding LIP functions on Kirch-open sets.

One would also hope that there is a connection with Diophantine equations. Obviously, if a system of Diophantine equations has a non-constant solution in $LIP(U)$ for some infinite $U\subseteq \mathbb Z$, then it has infinitely many solutions in $\mathbb Z$. The converse is unlikely to hold in general, and we do not  know if it is true even for the classical Pell's equation $x^2-2y^2=1.$ 

The paper is organized as follows. Section 2 is devoted to the general results for the rings of LIP functions on infinite subsets of $\mathbb Z$. In particular, we prove that these rings are never Noetherian. In Section 3 we study $LIP(\mathbb Z)$ with the special emphasis on growth estimates for non-polynomial LIP functions. In Section 4 we consider the problem of extending a LIP function from one infinite subset to another (LIP continuation). This leads us to a question of when an integer-valued function is not LIP, and we study the ``minimal obstructions'' to LIPness, that we call circuits. In the final Section 5 we study the sheaf-theoretic properties of the rings of LIP functions. The main result of this section is Theorem 9 that shows that a function is LIP if its restriction to each subset in some covering family is LIP, provided that the intersection of any two subsets in the family is ``large enough''. This implies that LIP functions form not just a presheaf, but a sheaf in some natural non-Hausdorff topologies. We also define and study the sheafification of the presheaf of LIP functions, the ``locally LIP'' functions, with respect to the Kirch topology on $\mathbb N$ and discuss some open questions regarding this notion.

\section{General results on LIP functions on infinite subsets of $\mathbb Z$}

We start with the basic definitions.

\begin{Definition} Suppose $U$ is an infinite subset of $\mathbb Z$. Then $f\colon U \to \mathbb Z$ is a locally integer polynomial (LIP, for short) function if the restriction of $f$ to any finite subset $X$ of $U$ can be given by a polynomial with integer coefficients. 
\end{Definition}

\begin{Definition} For an infinite subset $U$ of $\mathbb Z$, we denote by $LIP(U)$ the subset of all functions $f\colon U \to \mathbb Z$ that consists of all  LIP functions on $U$.
\end{Definition}

The following lemma is immediate.

\begin{Lemma}
Suppose $U\subseteq \mathbb Z$ is infinite. Then $LIP(U)$ is a subring of the ring of all integer-valued functions on $U$. It is a commutative ring with identity, containing $\mathbb Z[x]$.
\end{Lemma}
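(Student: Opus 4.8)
\emph{Proof proposal.} The plan is to reduce everything to one observation: for each finite set $X\subseteq\mathbb Z$, if $\mathbb Z^S$ denotes the ring of integer-valued functions on a set $S$ with pointwise operations, then the restriction map $\rho_X\colon \mathbb Z^U\to\mathbb Z^X$ is a ring homomorphism, and the image $S_X:=\rho_X(\mathbb Z[x])$ of the polynomial ring is a subring of $\mathbb Z^X$. By Definition 1, a function $f\colon U\to\mathbb Z$ is LIP precisely when $\rho_X(f)\in S_X$ for every finite $X\subseteq U$, that is,
\[
LIP(U)=\bigcap_{X\subseteq U\text{ finite}}\rho_X^{-1}(S_X).
\]
Each $\rho_X^{-1}(S_X)$ is a subring of $\mathbb Z^U$, being the preimage of a subring under a ring homomorphism, and an arbitrary intersection of subrings is a subring; hence $LIP(U)$ is a subring of $\mathbb Z^U$. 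It is then automatically commutative, with the same identity $1$, since $\mathbb Z^U$ is and the constant function $1$ lies in every $S_X$.

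Concretely, I would spell out closure under the ring operations so that the argument is self-contained. Given $f,g\in LIP(U)$ and a finite $X\subseteq U$, choose $p,q\in\mathbb Z[x]$ with $p(a)=f(a)$ and $q(a)=g(a)$ for all $a\in X$; then $p+q$, $p-q$, and $pq$ lie in $\mathbb Z[x]$ and represent $f+g$, $f-g$, and $fg$ on $X$, so these three functions are LIP as well. The containment $\mathbb Z[x]\subseteq LIP(U)$ is immediate, since a polynomial with integer coefficients restricts, on any finite set, to itself.

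One minor point I would address explicitly is the compatibility of Definition 1 with the normalized interpolant $f_X$ from the introduction. If some $p\in\mathbb Z[x]$ agrees with $f$ on $X$, then dividing $p$ by the monic integer polynomial $w_X(x)=\prod_{a\in X}(x-a)$ yields $p=w_X Q+R$ with $Q,R\in\mathbb Z[x]$ and $\deg R<|X|$; since $R$ still agrees with $f$ on $X$ and has degree $<|X|$, uniqueness of the low-degree interpolant forces $R=f_X$, so $f_X\in\mathbb Z[x]$. Thus ``represented on $X$ by some integer polynomial'' and ``$f_X\in\mathbb Z[x]$'' are equivalent, and the two descriptions of $LIP(U)$ coincide. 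There is essentially no obstacle in this lemma; the only point requiring a little care is that one should \emph{not} assume $(fg)_X=f_X\,g_X$ (the product of the minimal-degree interpolants may have degree $\ge|X|$), which is precisely why I argue with arbitrary integer polynomial representatives rather than with $f_X$ directly.
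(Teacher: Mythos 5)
Your proposal is correct and follows essentially the same route as the paper: closure under the ring operations is verified by combining integer-polynomial representatives on each finite set, and $\mathbb Z[x]$ lands inside $LIP(U)$ by restriction (your extra remarks reconciling ``some integer polynomial on $X$'' with ``$f_X\in\mathbb Z[x]$'' anticipate the paper's Lemma~2 rather than being needed here). The one point the paper states that you leave implicit is that the evaluation map $\mathbb Z[x]\to\mathbb Z^U$ is injective because $U$ is infinite, which is what justifies saying $LIP(U)$ \emph{contains} $\mathbb Z[x]$ rather than a quotient of it.
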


\begin{proof} As a subset of the ring of all integer-valued functions on $U$, the set $LIP(U)$ is closed under subtraction and multiplication. Since polynomials are uniquely determined by their values on the infinite set $U$, the natural evaluation map from $\mathbb Z[x]$ to the ring of all integer-valued functions on $U$ is an inclusion. And, clearly, the image of this map is in $LIP(U)$.
\end{proof}

It is logically possible that even if the interpolation polynomial for the function $f$ on a finite set $X$ is not an integer polynomial, there exists some other polynomial $p(x) \in \mathbb Z[x]$, of higher degree, that takes the same values on all $x\in X$. However the following easy lemma shows that this cannot happen (cf. also \cite{Sengupta2}, Lemma 2.2). 

\begin{Lemma} Suppose $X=\{x_0,\ldots ,x_d\}\subset \mathbb Z$. Suppose $f_X(x)\in \mathbb Q[x]$ is a polynomial of degree at most $d,$ and $p(x)\in \mathbb Z[x]$ is such that   $p(x)=f_X(x)$ for all $x\in X$. Then $f_X(x)\in \mathbb Z[x]$.
\end{Lemma}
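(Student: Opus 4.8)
The plan is to use finite differences (the discrete analogue of derivatives) to detect integrality of the interpolation polynomial. Write the interpolation polynomial $f_X$ in the Newton basis built from the nodes $x_0, \ldots, x_d$, namely $f_X(x) = \sum_{k=0}^{d} c_k \prod_{j<k}(x - x_j)$, where the coefficients $c_k$ are the divided differences $f[x_0, \ldots, x_k]$. The point is that $f_X(x) \in \mathbb{Z}[x]$ if and only if every $c_k$ is an integer, since the products $\prod_{j<k}(x-x_j)$ are monic integer polynomials forming a $\mathbb{Z}$-basis for the polynomials of degree $\le d$ (one can pass between the Newton basis and the monomial basis by a unipotent integer change of basis once the nodes are integers). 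So the whole problem reduces to showing that each divided difference $f[x_0,\ldots,x_k]$ is an integer.

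The idea for that reduction is: the hypothesis gives us an \emph{honest} integer polynomial $p(x) \in \mathbb{Z}[x]$ agreeing with $f_X$ on all of $X$. I would first observe that for an integer polynomial $p$, the divided difference $p[x_0,\ldots,x_k]$ (evaluated at any integer nodes) is an integer: this follows either from the integral representation of divided differences, or more simply because $p[x_0,\ldots,x_k]$ is a $\mathbb{Z}$-linear combination of the coefficients of $p$ in the Newton basis at those same nodes — and those Newton coefficients are integers since $p$ has higher degree but can still be expanded in the (infinite) Newton basis $1, (x-x_0), (x-x_0)(x-x_1), \ldots$ with integer coefficients. Then, because divided differences of order $\le k$ depend only on the values at $x_0, \ldots, x_k$, we have $f[x_0,\ldots,x_k] = f_X[x_0,\ldots,x_k] = p[x_0,\ldots,x_k] \in \mathbb{Z}$, since $f = f_X = p$ on $X$. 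This gives $c_k \in \mathbb{Z}$ for all $k$, hence $f_X \in \mathbb{Z}[x]$.

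An alternative, perhaps cleaner, route avoids divided differences entirely: reduce modulo a prime power. Suppose $f_X \notin \mathbb{Z}[x]$; then some coefficient has a denominator, so pick a prime $q$ and let $q^a$ be the exact power of $q$ clearing the denominators, so $q^a f_X \in \mathbb{Z}[x]$ but $q^{a-1} f_X \notin \mathbb{Z}[x]$. Consider $g := q^a f_X - q^a p \in \mathbb{Z}[x]$; it vanishes on all of $X$, so it is divisible by $\prod_{i=0}^d (x - x_i)$ in $\mathbb{Q}[x]$, and since that product is monic with integer coefficients and $g$ has integer coefficients, $g = h(x)\prod_{i}(x-x_i)$ with $h \in \mathbb{Z}[x]$. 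But $\deg f_X \le d$ forces a comparison of degrees/leading coefficients that pins down the structure enough to derive a contradiction with minimality of $a$ — essentially showing the denominator could not have been there. I would choose whichever of these two writeups is shorter; I expect the divided-difference argument to be the most transparent.

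The main obstacle, such as it is, is purely bookkeeping: making precise the claim that an integer polynomial (of possibly large degree) has integer coordinates in the Newton basis attached to the integer nodes $x_0, \ldots, x_d$, and that divided differences of order at most $k$ are insensitive to the values at nodes beyond $x_k$. Both are standard facts about finite differences over $\mathbb{Z}$, but they are the crux: everything else is a one-line deduction from ``$f$, $f_X$, and $p$ all agree on $X$.'' There is no deep difficulty here; the lemma is genuinely ``easy'' as the authors say, and the only care needed is to state the divided-difference machinery over $\mathbb{Z}$ rather than over $\mathbb{Q}$.
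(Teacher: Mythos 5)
Your divided-difference argument is correct, but it takes a different route from the paper, which argues much more directly: since $p$ and $f_X$ agree on $X$, one writes $p(x)=f_X(x)+g(x)\prod_{i=0}^d(x-x_i)$ with $g\in\mathbb Q[x]$, and if $g$ had a non-integer coefficient, the highest-degree such coefficient $g_k$ would reappear verbatim as the coefficient of $x^{k+d+1}$ in $p$ (the term $f_X$, having degree at most $d$, cannot interfere there, and the higher coefficients of $g$ are integers so contribute only integers), contradicting $p\in\mathbb Z[x]$. Your Newton-basis proof is really the same phenomenon dressed in standard interpolation machinery: the tail $\sum_{k>d}b_kN_k$ of the Newton expansion of $p$ (with the node list extended beyond $x_0,\dots,x_d$ by arbitrary integers --- a step you should make explicit, since "the infinite Newton basis" needs infinitely many nodes) is exactly the paper's $g(x)\prod(x-x_i)$, and the unitriangular integer change of basis encodes the paper's top-down coefficient comparison. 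What your version buys is a clean conceptual statement ($f_X\in\mathbb Z[x]$ iff all divided differences $f_X[x_0,\dots,x_k]$ are integers, and these are computable from values shared with $p$), which also connects naturally to the paper's Theorem 1; what it costs is having to set up the lemmas about integer Newton expansions that the paper's three-line argument avoids. Your alternative mod-$q$ sketch can indeed be completed (reduce $q^af_X=q^ap+h\prod(x-x_i)$ modulo $q$ and compare degrees: the left side is a nonzero polynomial of degree at most $d$ while the right side, if nonzero, has degree at least $d+1$), but as written it stops short of the actual contradiction, so you should not rely on it in its current form.
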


\begin{proof} Clearly, $p(x)=f_X(x)+g(x)\cdot \prod \limits_{i=0}^d (x-x_i)$ for some $g(x)\in \mathbb Q[x]$. If $g(x) \notin \mathbb Z[x],$ denote by $k$ the largest degree of $x$ in $g(x)$ with non-integer coefficient $g_k$. Since the coefficient of $x^{k+d+1}$ in $p(x)$ also equals $g_k$, we see that $p(x) \notin \mathbb Z[x]$, a contradiction.  Thus $g(x)\in \mathbb Z[x]$, so $f_X(x)\in \mathbb Z[x]$.
\end{proof}

The above lemma suggests the following definition.

\begin{Definition} For a function $f\in LIP(U)$ and finite $X\subset U$, we call the interpolation polynomial for $f$ on $X$ the restriction of $f$ to $X$, to be denoted by $f_X$, or $f_X(x)$ is we want to specify that we consider it as an element of $\mathbb Z[x]$. We will also use the same convention for arbitrary $f\colon U \to \mathbb Z$, except that $f_X$ may be a rational polynomial.
\end{Definition}

The following theorem shows that $LIP(U)$ is a much bigger ring than $\mathbb Z[x]$. In fact, it has the same cardinality as $\mathbb R$.

\begin{Theorem} Suppose $\sigma: \mathbb N \to U$ is any bijection. Then
 for any $a_0,a_1,a_2,\ldots \in \mathbb Z$ the following function is in $LIP(U)$:
 $$f(x)=\sum \limits_{k=0}^{\infty}a_k\prod \limits_{i=1}^k (x-\sigma(i)) = a_0+a_1(x-\sigma(1))+\ldots +a_k\prod \limits_{i=1}^k (x-\sigma(i))+\ldots $$
Moreover, every function in $LIP(U)$ is of this form for some unique sequence of integers $\{a_i\}_{i=0}^{\infty}$.
\end{Theorem}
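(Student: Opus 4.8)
The plan is to argue in two directions. For the first part, I would fix any finite set $X\subset U$ and show that the formally infinite sum defining $f$ agrees, on $X$, with an honest element of $\mathbb Z[x]$. The key observation is that for $x=\sigma(j)$ the term $a_k\prod_{i=1}^k(x-\sigma(i))$ vanishes as soon as $k\geq j$, so only finitely many terms survive on any finite set; more precisely, if $X\subseteq\{\sigma(1),\ldots,\sigma(N)\}$ then $f$ restricted to $X$ equals the restriction of the genuine integer polynomial $p_N(x)=\sum_{k=0}^{N-1}a_k\prod_{i=1}^k(x-\sigma(i))\in\mathbb Z[x]$. Since $f_X$ is by definition the unique polynomial of degree $<|X|$ interpolating these values, Lemma 4 (applied to $p_N$) forces $f_X\in\mathbb Z[x]$. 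Hence $f$ is LIP. One subtlety to state carefully: the value $f(x)$ is well-defined as an integer for every $x\in U$ precisely because the sum is eventually zero termwise, so there is no convergence issue — it is a finite sum in disguise.

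For the second part I would build the sequence $\{a_i\}$ by a Newton-forward-difference style recursion adapted to the ordering $\sigma$. Given $f\in LIP(U)$, set $a_0=f(\sigma(1))$ — wait, I should be careful with indices: the $k=0$ term is the constant $a_0$ and the $k=1$ term vanishes at $x=\sigma(1)$, so evaluating at $\sigma(1)$ gives $f(\sigma(1))=a_0$. Then, assuming $a_0,\ldots,a_{k-1}$ have been determined so that $f$ and the partial sum $\sum_{j=0}^{k-1}a_j\prod_{i=1}^j(x-\sigma(i))$ agree at $\sigma(1),\ldots,\sigma(k)$, evaluate both sides at $x=\sigma(k+1)$: all terms with index $>k$ in the full expansion vanish there, so the equation $f(\sigma(k+1))=\big(\text{partial sum at }\sigma(k+1)\big)+a_k\prod_{i=1}^k(\sigma(k+1)-\sigma(i))$ determines $a_k$ uniquely as a rational number, since $\prod_{i=1}^k(\sigma(k+1)-\sigma(i))\neq 0$ (the $\sigma(i)$ are distinct). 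This simultaneously proves uniqueness: the $a_k$ are forced by the values of $f$ on the nested finite sets $\{\sigma(1),\ldots,\sigma(k+1)\}$.

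The one genuine thing to verify — and the step I expect to be the main obstacle — is that each $a_k$ produced by this recursion is actually an \emph{integer}, not merely a rational number. This is where the LIP hypothesis does real work, not just the weaker ``$n-m \mid f(n)-f(m)$'' condition. The clean way to see it: apply the hypothesis to the finite set $X_k=\{\sigma(1),\ldots,\sigma(k+1)\}$, so $f_{X_k}\in\mathbb Z[x]$ has degree $\leq k$. By the uniqueness of interpolation, $f_{X_k}$ must equal $\sum_{j=0}^{k}a_j\prod_{i=1}^j(x-\sigma(i))$, because the latter has degree $\leq k$ and agrees with $f$ at the $k+1$ points of $X_k$. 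Now read off $a_k$ as (essentially) the leading coefficient: the coefficient of $x^k$ in $f_{X_k}$ is exactly $a_k$, and it lies in $\mathbb Z$ since $f_{X_k}\in\mathbb Z[x]$. (For the lower $a_j$ with $j<k$ one either inducts downward or notes that they were already shown integral at an earlier stage, using $X_j\subset X_k$.) Thus the recursion stays in $\mathbb Z$, every $f\in LIP(U)$ has the claimed expansion, and the expansion is unique. Finally, combining uniqueness with the first part gives a bijection between $LIP(U)$ and $\mathbb Z^{\mathbb N}$, yielding the cardinality statement mentioned in the remark preceding the theorem.
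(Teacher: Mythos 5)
Your proposal is correct and follows essentially the same route as the paper: the formal sum is a finite sum on any finite set (so the first direction is immediate), and the coefficients are recovered by the same Newton-style recursion, with integrality of each $a_k$ read off as the top coefficient of the integer interpolation polynomial on $\{\sigma(1),\ldots,\sigma(k+1)\}$ (the paper's Lemma 2 is what guarantees that this minimal-degree interpolant is itself in $\mathbb Z[x]$). The only discrepancy is cosmetic: the lemma you cite as ``Lemma 4'' is Lemma 2 in the paper's numbering.
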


\begin{proof} Even though the sum is formally infinite, all terms of its restriction to $x\in U$ vanish for $k\geq \sigma^{-1}(x)$. Thus this formula does define an integer-valued function on $U$ and, moreover, it is in $LIP(U)$. To prove the existence and uniqueness, denote by $f_n(x)$ the partial sum of the above series: $f_n(x)=\sum \limits_{k=0}^{n-1}a_k\prod \limits_{i=1}^k (x-\sigma(i)).$ Then for every $m=\sigma(n)\in U$ we need to have
$$f(m)=f_{n}(m)= f_{n-1}(m)+a_{n-1}\prod \limits_{i=1}^{n-1} (m-\sigma(i)).$$
This is equivalent to $$a_{n-1}=(f(m)-f_{n-1}(m))/ \prod \limits_{i=1}^{n-1} (m-\sigma(i)).$$
Thus for every $f\colon U \to \mathbb Z$ we can recursively define the unique sequence of rational $a_i$ such that $f$ is given by the above infinite series. And if $f$ is a LIP function then all polynomials $f_n$ are in $\mathbb Z[x],$ so all $a_i$ are integers. 
\end{proof}

\begin{Remark}
Different $\sigma$ produce different sequences of coefficients for the same function $f$. And while this description of LIP functions is good for their addition, it is not well suited for the product.
\end{Remark}

\begin{Remark}
Instead of the sums above, we can consider formal sums
$f(x)=\sum \limits_{k=0}^{\infty}a_k(x) \prod \limits_{i=1}^k p_i(x)^k$, where the sequence $\{p_n(x)\}$ runs through the (countable) set of all irreducible integer polynomials with content $1$ and positive leading coefficient, and $a_k(x)$ are integer polynomials. Such functions are defined not just on $\mathbb Z$, but on $\mathbb Q^{alg}$. Moreover, all their formal derivatives are also defined on $\mathbb Q^{alg}$ and have the property that all restrictions to finite subsets are given by integer polynomials. Perhaps, such generalizations are also worth studying.
\end{Remark}

The following observation is vaguely reminiscent of a well-known important property of complex analytic functions.

\begin{Lemma} Suppose $f \in LIP(U)$ and $f(x)=0$ for infinitely many $x\in U$. Then $f=0$.
\end{Lemma}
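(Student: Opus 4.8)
The plan is to use the series representation from Theorem~1 together with the "large index" divisibility built into the products $\prod_{i=1}^k (x-\sigma(i))$. Fix a bijection $\sigma\colon\mathbb N\to U$ and write $f(x)=\sum_{k=0}^\infty a_k\prod_{i=1}^k(x-\sigma(i))$ with all $a_k\in\mathbb Z$. It suffices to show every $a_k=0$, and we do this by induction on $k$: assuming $a_0=\cdots=a_{k-1}=0$, the function is $f(x)=\sum_{j\ge k}a_j\prod_{i=1}^j(x-\sigma(i))$, so for every $x\in U$ the value $f(x)$ is divisible by $\prod_{i=1}^k(x-\sigma(i))$, whence $\bigl|\prod_{i=1}^k(x-\sigma(i))\bigr|\le |f(x)|$ whenever $f(x)\ne 0$.

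The point is now to exploit the infinitude of the zero set. Pick $x_0\in U$ with $f(x_0)=0$; then $x_0\ne\sigma(i)$ is impossible to rule out directly, but in fact $f(x_0)=0$ forces $a_k\prod_{i=1}^k(x_0-\sigma(i)) = -\sum_{j>k}a_j\prod_{i=1}^j(x_0-\sigma(i))$, and every term on the right contains the factor $\prod_{i=1}^k(x_0-\sigma(i))$, so that factor divides $a_k\prod_{i=1}^k(x_0-\sigma(i))$ trivially — which gives nothing. Instead I would argue directly with sizes: by the inductive hypothesis, for every $x\in U$ we have $\prod_{i=1}^{k}(x-\sigma(i)) \mid f(x)$ in $\mathbb Z$. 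Choose $k+1$ distinct elements $x_0,\dots,x_k\in U$, all with $f(x_j)=0$ (possible since the zero set is infinite). These $k+1$ points, together with the polynomial identity, let me instead restrict $f$ to the finite set $X=\{\sigma(1),\dots,\sigma(k),x_0,\dots,x_k\}$: the interpolation polynomial $f_X$ has degree $\le 2k$, lies in $\mathbb Z[x]$ since $f$ is LIP, vanishes at the $k+1$ points $x_0,\dots,x_k$, and agrees with $a_k\prod_{i=1}^k(x-\sigma(i))$ plus higher terms at $\sigma(1),\dots,\sigma(k)$.

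Actually the cleanest route avoids juggling $f_X$: I would prove the stronger statement that $f$ has only finitely many zeros unless $f\equiv 0$, by a growth/divisibility contradiction. Suppose $f\not\equiv 0$, so there is a least $k$ with $a_k\ne 0$. For each $x\in U$, $f(x)=\prod_{i=1}^k(x-\sigma(i))\cdot g(x)$ where $g(x)=\sum_{j\ge k}a_j\prod_{i=k+1}^{j}(x-\sigma(i))$ is itself an integer-valued (indeed LIP) function on $U$, with $g(\sigma(i))=a_k\ne 0$ for $i=1,\dots,k$... more to the point, $f(x)=0$ with $x\notin\{\sigma(1),\dots,\sigma(k)\}$ forces $g(x)=0$. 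So $g\in LIP(U\setminus\{\sigma(1),\dots,\sigma(k)\})$ — or just restrict attention there — has the same infinite zero set minus finitely many points, and its leading coefficient (in the $\sigma$-shifted expansion) is $a_k\ne 0$. Iterating strips one factor at a time but never terminates, contradicting that $f$, being a genuine function, cannot be divisible by $\prod_{i=1}^N(x-\sigma(i))$ for all $N$ at any single zero $x=x_0$: indeed if $f(x_0)=0$ and we track the expansion centered so that $\sigma(i)$ runs first through a long initial segment \emph{containing} $x_0$, then $f(x_0)=0$ reads off immediately from the partial sum, giving no info — so the real content must come from a counting argument.

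The honest main obstacle, then, is that the series expansion alone does not see "infinitely many zeros"; one needs a finite-set argument. So here is the plan I would actually commit to. Let $S=\{x\in U: f(x)=0\}$ be infinite, and suppose $f\ne 0$. Pick any $x^*\in U$ with $f(x^*)\ne 0$ (if none, done), and set $N=|f(x^*)|$. Now choose a finite set $X\subset U$ with $x^*\in X$ and with $X\cap S$ containing more than $\log_2 N + 1$ elements spread out so that $\prod$ of differences is large; concretely, pick $x^*$ and then $n$ distinct zeros $z_1<\cdots<z_n$ in $S$ with each $|x^*-z_j|\ge 1$ and the $z_j$ mutually far apart, so that in the interpolation polynomial $f_X\in\mathbb Z[x]$ (degree $\le n$) the factorization forced by the $n$ roots $z_1,\dots,z_n$ gives $f_X(x)=(x-z_1)\cdots(x-z_n)\cdot h(x)$ with $h\in\mathbb Q[x]$; evaluating at $x^*$ and using that the $z_j$ are distinct integers $\ne x^*$ shows $\prod_j|x^*-z_j|$ divides $f(x^*)$, and by choosing $n$ larger than $|f(x^*)|$ (using that $S$ is infinite so such $z_j$ exist) we get $\prod_j|x^*-z_j|\ge 2^{\,\text{something}} > |f(x^*)|$, forcing $f(x^*)=0$, a contradiction. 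The only genuinely delicate point is ensuring $h(x^*)$ is an integer (or at least that the product of the $|x^*-z_j|$ divides $f(x^*)$ as integers), which follows from Lemma~4 applied to the finite set $X = \{x^*, z_1,\dots,z_n\}$: $f_X\in\mathbb Z[x]$, it vanishes at each $z_j$, so $(x-z_1)\cdots(x-z_n)\mid f_X(x)$ in $\mathbb Z[x]$ (the $z_j$ being distinct integers, so the monic product divides), and hence $\prod_j (x^*-z_j)$ divides $f_X(x^*)=f(x^*)$ in $\mathbb Z$; taking $n> |f(x^*)|$ makes the left side exceed the right in absolute value unless $f(x^*)=0$.
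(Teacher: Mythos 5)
Your final committed plan is correct and is in essence the paper's own argument, which is just the $n=1$ case: for any $m\in U$ with $f(m)\neq 0$, pick a single zero $z$ with $|z-m|>|f(m)|$; then $f_{\{m,z\}}\in\mathbb Z[x]$ forces $(z-m)\mid f(m)$, so $f(m)=0$. Two small remarks. First, the literal quantifier ``take $n>|f(x^*)|$ zeros, each at distance $\geq 1$'' does not by itself make $\prod_j|x^*-z_j|$ exceed $|f(x^*)|$ (e.g.\ $f(x^*)=1$ with zeros at $x^*\pm 1$ gives product $1$); you need the zeros chosen far from $x^*$, which you do mention and which the infinitude of the zero set guarantees --- and then one such zero already suffices, making the multi-point interpolation unnecessary. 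Second, the first two-thirds of your write-up (the series expansion, the induction on $a_k$, the factor-stripping) are abandoned false starts that contribute nothing to the final proof and should be cut.
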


\begin{proof} Suppose $X=\{x\in \mathbb Z\ |\ f(x)=0\}$. Take arbitrary $m\in U$. Since $X$ is infinite, we can find  $x\in X$ such that $|x-m| > |f(m)|$. Since $|x-m|$ must divide $|f(x)-f(m)|=|f(m)|$, we must have  $f(m)=0$. Thus $f=0$.
\end{proof}

\begin{Corollary} Suppose $U$ and $V$ are infinite subsets of $\mathbb Z$ and $U\subseteq V$. Then the restriction map $LIP(V)\to LIP(U)$ is an injection. In particular, $LIP(\mathbb Z)$ is naturally contained in all $LIP(U)$. 
\end{Corollary}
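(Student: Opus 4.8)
The plan is to deduce the Corollary directly from the preceding Lemma, which states that a LIP function with infinitely many zeros on its domain is identically zero. First I would observe that the restriction map $LIP(V)\to LIP(U)$ is a well-defined ring homomorphism: if $f\in LIP(V)$ then every finite subset of $U$ is also a finite subset of $V$, so the interpolation polynomial of $f$ on that set lies in $\mathbb Z[x]$, hence $f|_U\in LIP(U)$. This part is routine and essentially just unwinds the definitions.

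The key step is injectivity. Since $LIP(V)\to LIP(U)$ is additive, it suffices to show its kernel is trivial, i.e.\ that if $f\in LIP(V)$ satisfies $f|_U=0$ then $f=0$ on all of $V$. But $U$ is infinite by hypothesis, so $f$ vanishes at infinitely many points of $V$; applying the Lemma with the domain taken to be $V$ gives $f=0$. This is the whole content of the argument, and I do not expect any obstacle here — the real work was already done in proving that a LIP function cannot have infinitely many zeros without being zero.

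Finally, for the last sentence I would take $V=\mathbb Z$ and let $U$ range over all infinite subsets: the restriction maps $LIP(\mathbb Z)\to LIP(U)$ are then injections by what was just shown, so $LIP(\mathbb Z)$ sits inside every $LIP(U)$ compatibly. One small point worth stating explicitly is that these inclusions are compatible with further restriction (if $U\subseteq V\subseteq \mathbb Z$, restricting from $\mathbb Z$ to $U$ equals first restricting to $V$ and then to $U$), which is immediate from the definition of restriction of functions; this is what makes the phrase ``naturally contained'' precise and is the germ of the presheaf structure used later in the paper.
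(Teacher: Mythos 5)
Your proof is correct and is exactly the argument the paper intends: the Corollary is stated immediately after the lemma on infinitely many zeros precisely because injectivity follows by applying that lemma to a difference of two LIP functions on $V$ agreeing on the infinite set $U$. The remarks about well-definedness of restriction and compatibility of the inclusions are routine but accurate.
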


\begin{Lemma} For every infinite $U\subseteq \mathbb Z$ the ring $LIP(U)$ is a domain, and its only units are $1$ and $-1$.
\end{Lemma}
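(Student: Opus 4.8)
The claim has two parts: $LIP(U)$ is an integral domain, and its only units are $\pm 1$. The plan is to prove both parts using the divisibility condition $n - m \mid f(n) - f(m)$ that every LIP function satisfies (the case $|X| = 2$ of the definition), together with the preceding Lemma on the vanishing of LIP functions.

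For the domain property, suppose $f, g \in LIP(U)$ are both nonzero; I want to show $fg \neq 0$. Since $f \neq 0$, the previous Lemma tells us that $f(x) = 0$ for only finitely many $x \in U$, and likewise for $g$. Hence there are infinitely many $x \in U$ with $f(x) \neq 0$ and $g(x) \neq 0$, so $(fg)(x) = f(x)g(x) \neq 0$ for such $x$, since $\mathbb Z$ is a domain. Therefore $fg \neq 0$, and $LIP(U)$ is a domain.

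For the units: suppose $f \in LIP(U)$ is a unit, with $fg = 1$ for some $g \in LIP(U)$. Then for every $x \in U$ we have $f(x)g(x) = 1$ in $\mathbb Z$, so $f(x) \in \{1, -1\}$ for all $x$. Now fix any two points $m, n \in U$. Since $f$ is LIP, $n - m$ divides $f(n) - f(m)$. But $f(n) - f(m) \in \{-2, 0, 2\}$, so if $f(n) \neq f(m)$ we would need $n - m \mid 2$ — which fails as soon as $|n - m| > 2$. Since $U$ is infinite, it is not contained in any interval of length $2$ around a fixed point, so for each fixed $m$ we can choose $n \in U$ with $|n - m| > 2$, forcing $f(n) = f(m)$. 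Running $m$ over all of $U$ shows $f$ is constant, hence $f \equiv 1$ or $f \equiv -1$.

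Neither part presents a genuine obstacle; the only mild point of care is the units argument, where one must invoke the infinitude of $U$ to rule out the pathological possibility that $f$ switches sign between two points that happen to be very close together — but since $U$ is infinite this is immediate, and in fact one could even get away with using a single pair of points $m, n \in U$ with $|n - m| \geq 3$ and then propagating. I would present the domain part first as it is the cleaner of the two and sets up the use of the vanishing Lemma.
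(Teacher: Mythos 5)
Your proof is correct. The domain half is essentially the paper's argument (the contrapositive of the vanishing lemma: two nonzero LIP functions each vanish only finitely often, so their product is nonzero somewhere). For the units half you take a genuinely different route: after observing $f(x)=\pm 1$ pointwise, you invoke the divisibility condition $n-m \mid f(n)-f(m)$ to force $f$ to be constant on any infinite set. The paper instead gets the units claim for free from the domain property it has just established: since $f(x)^2=1$ for all $x$, the element $(f-1)(f+1)$ is zero in $LIP(U)$, and a domain has no zero divisors, so $f=1$ or $f=-1$. The paper's version is shorter and reuses the first half of the lemma; yours is more elementary and makes visible the arithmetic mechanism ($|n-m|>2$ rules out a sign change), at the cost of an extra step. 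One small point to tighten in your write-up: knowing that for each $m$ there is \emph{some} far-away $n$ with $f(n)=f(m)$ does not literally yield constancy; you should say that for any two points $m_1,m_2\in U$ you choose a single $n\in U$ with $|n-m_1|>2$ and $|n-m_2|>2$ (possible since $U$ is infinite), giving $f(m_1)=f(n)=f(m_2)$. Your parenthetical about propagating from a single pair shows you see this; just make it explicit.
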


\begin{proof} If $f(x)g(x)=0$ for all $x\in U$, then $f$ or $g$ must vanish on an infinite subset of $U$, so by Lemma 3 it must be zero.
If $f(x)g(x)=1$  for all $x\in U$, then $f(x)=\pm 1$ for all $x$. Therefore $(f-1)(f+1)=0,$ so $f=1$ or $f=-1$.
\end{proof}

We can take this one step further, with the following easy theorem.

\begin{Theorem} Suppose $U\subseteq \mathbb Z$ is infinite. If $f\in LIP(U)$ and $|f(x)|$ is prime for infinitely many $x$, then $f$ is irreducible in $ LIP(U)$.
\end{Theorem}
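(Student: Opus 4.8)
The plan is to argue by contradiction: suppose $f = gh$ for some $g, h \in LIP(U)$ with neither a unit. By Lemma 4, the only units are $\pm 1$, so neither $g$ nor $h$ is identically $\pm 1$. Now for each of the infinitely many $x \in U$ with $|f(x)|$ prime, we have $|g(x)|\cdot|h(x)| = |f(x)|$, which is prime, so exactly one of $|g(x)|, |h(x)|$ equals $1$ and the other equals $|f(x)|$. Partition this infinite set of $x$ according to whether $|g(x)| = 1$ or $|h(x)| = 1$; one of the two parts is infinite, say the part where $|g(x)| = 1$ is infinite (the other case is symmetric).

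On that infinite set, $g(x) \in \{1, -1\}$, so $(g(x) - 1)(g(x) + 1) = 0$ for infinitely many $x \in U$. The product $(g-1)(g+1)$ lies in $LIP(U)$, and it vanishes on an infinite subset of $U$; by Lemma 3 it is identically zero. Since $LIP(U)$ is a domain (Lemma 4), either $g - 1 = 0$ or $g + 1 = 0$, i.e. $g \equiv 1$ or $g \equiv -1$ on all of $U$. In either case $g$ is a unit, contradicting our assumption. Hence no such factorization exists and $f$ is irreducible.

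I do not expect any real obstacle here; the argument is a direct combination of Lemma 3 (infinitely many zeros forces the zero function) and Lemma 4 (domain, units are $\pm 1$), exactly mirroring the proof of Lemma 4 itself. The only point requiring a word of care is the pigeonhole step: "prime" must be read as "$|f(x)|$ is a prime number," so that the only factorizations of $|f(x)|$ into positive integers are $1 \cdot |f(x)|$, which is what lets us split the infinite set into the two cases and extract an infinite part on which $|g|$ (or $|h|$) is constantly $1$.
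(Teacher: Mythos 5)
Your proof is correct and takes essentially the same route as the paper: pigeonhole the infinitely many prime values so that one factor has absolute value $1$ infinitely often, then use Lemma 3 to conclude that factor is identically $\pm 1$, hence a unit. The only cosmetic difference is that the paper splits directly into four cases ($g(x)=1$, $g(x)=-1$, $h(x)=1$, $h(x)=-1$) while you split into two and then invoke the domain property via $(g-1)(g+1)=0$; both are fine.
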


\begin{proof} If $f=gh,$ then one of the following must be true for infinitely many $x\in U$: $g(x)=1,$ $g(x)=-1$, $h(x)=1,$ $h(x)=-1$. In the first two cases $g$ is a unit, and in the last two cases $h$ is a unit.
\end{proof}

So far we have only used the property of LIP functions that the difference of inputs must divide the difference of outputs. The proof of the next theorem uses the LIP property in a more substantial way.

\begin{Theorem} For $C \in \mathbb{R}$ and $d \in \mathbb{N}$, suppose $f \in LIP(U)$ is such that $|f(x)|\leq C|x|^d$ for infinitely many $x\in U.$ Then $f\in \mathbb Z[x]$.
\end{Theorem}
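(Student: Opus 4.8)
The idea is to use the Newton-interpolation representation from Theorem 1 together with the divisibility structure of finite difference quotients. Fix a bijection $\sigma\colon \mathbb N \to U$, and write $f(x)=\sum_{k\geq 0} a_k \prod_{i=1}^k (x-\sigma(i))$ with all $a_k\in\mathbb Z$. To show $f\in\mathbb Z[x]$ it suffices to show $a_k=0$ for all $k$ large enough, because then the sum is finite and hence a genuine integer polynomial. So the whole game is to bound $|a_k|$ and force it to vanish for $k>d$.

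First I would recall that the coefficient $a_k$ in the Newton expansion is, up to sign, a divided difference of $f$ on the nodes $\sigma(1),\ldots,\sigma(k+1)$, and in fact $a_k = \sum_{j} f(\sigma(j))\big/\prod_{i\neq j}(\sigma(j)-\sigma(i))$ over the relevant index set; the key point is that each such $a_k$ is an integer (by Theorem 1) but is also expressible through values of $f$ on \emph{any} set of $k+1$ points of $U$, since the top Newton coefficient on a finite node set $\{x_0,\ldots,x_k\}$ depends only on the set, not the ordering, and equals the leading coefficient of $f_{\{x_0,\ldots,x_k\}}$. This freedom in choosing nodes is the crucial flexibility. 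The strategy: for a fixed target degree index $k=d+1$, choose the $k+1$ interpolation nodes to be points $x$ of $U$ with $|x|$ very large \emph{and} satisfying the growth bound $|f(x)|\leq C|x|^d$ — there are infinitely many such $x$, so we may take them arbitrarily large and spread out. Then the divided-difference formula expresses $a_{d+1}$ (the leading coefficient of $f_X$ for $X$ this node set) as a sum of $d+2$ terms, each of the shape $f(x_j)/\prod_{i\neq j}(x_j-x_i)$; bounding the numerator by $C|x_j|^d$ and bounding the denominator below by a product of $d+1$ pairwise differences that we have arranged to be large, we get $|a_{d+1}|<1$, hence $a_{d+1}=0$ since it is an integer. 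Repeating (or rather, the same argument applied with $k+1>d+1$ nodes) kills every higher Newton coefficient, so only $a_0,\ldots,a_d$ survive and $f$ is a polynomial of degree $\leq d$ with integer coefficients.

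The main obstacle is making the denominator bound effective: if we just pick large nodes $x_0<x_1<\cdots<x_{d+1}$, the differences $x_j-x_i$ for $i,j$ close together need not be large, so $\prod_{i\neq j}|x_j-x_i|$ is not automatically $\gg |x_j|^d$. The fix is to choose the nodes to be widely spaced — e.g. inductively pick $x_{m}$ from the infinite good set so that $|x_m|$ exceeds, say, twice the previous one, and more strongly so that each gap $|x_{m}-x_{m-1}|$ is at least $|x_m|/2$; then for the extreme node $x_{d+1}$ every factor $|x_{d+1}-x_i|$ is comparable to $|x_{d+1}|$, giving $\prod_{i\neq d+1}|x_{d+1}-x_i| \geq c|x_{d+1}|^{d+1}$ for an explicit constant $c=c(d)$, while the corresponding numerator is only $\leq C|x_{d+1}|^d$; letting $|x_{d+1}|\to\infty$ along the good set, the term indexed by $j=d+1$ tends to $0$, and symmetrically one handles the other terms, or more cleanly one notes $a_{d+1}$ does not depend on the choice of node set at all, so a single sufficiently spread-out and large choice already forces $|a_{d+1}|<1$. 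I would also double-check the sign-free combinatorial identity for the top divided difference and the claim that it is ordering-independent, citing the uniqueness clause of Theorem 1 (or Lemma 2) to guarantee integrality. Finally, I would remark that the same proof shows more: any $f\in LIP(U)$ with $\liminf_{|x|\to\infty,\,x\in U} |f(x)|/|x|^d < \infty$ lies in $\mathbb Z[x]$ with degree $\leq d$, which dovetails with the growth-estimate theme announced for Section 3.
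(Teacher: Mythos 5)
Your overall strategy (top Newton/divided-difference coefficients, bounded above by the growth hypothesis and forced to vanish because they are integers) is close in spirit to the paper's proof, but as written the argument has a genuine gap at its pivot point. You assert that ``$a_{d+1}$ does not depend on the choice of node set at all,'' and this is false: the divided difference $f[x_0,\dots,x_k]=\sum_j f(x_j)/\prod_{i\neq j}(x_j-x_i)$ is independent of the \emph{ordering} of the nodes, but it very much depends on \emph{which} $k+1$ points you take (if it did not, every function would be a polynomial). Since your framework fixes a bijection $\sigma\colon\mathbb N\to U$ and aims to kill the coefficients $a_k$ of that particular Newton expansion, you are stuck with the nodes $\sigma(1),\dots,\sigma(k+1)$, which for $k$ large must eventually include points of $U$ outside the good set $X=\{x\in U:|f(x)|\le C|x|^d\}$, where you have no bound on $|f|$; and they need not be well spread. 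So the freedom to choose large, widely spaced nodes in $X$ --- which is exactly what your estimate requires --- is not available for the coefficients you actually need to annihilate.

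What your estimate legitimately proves is this: for a single well-chosen, widely spaced set of $d+2$ (or more) large nodes inside $X$, the leading interpolation coefficient is an integer of absolute value less than $1$, hence zero. That only tells you that $f$ agrees with some fixed integer polynomial $g$ of degree at most $d$ on those nodes (or, with nested node sets, on an infinite subset of $X$). The missing step is to propagate this to all of $U$: apply Lemma 3 (a LIP function vanishing infinitely often is identically zero) to $f-g$. This is precisely how the paper closes the argument: it fixes $x_0,\dots,x_d\in X$, sets $g=f_{\{x_0,\dots,x_d\}}$, varies one extra node $x_{d+1}\in X$ with $|x_{d+1}|$ large so that the single new coefficient $a$ must vanish, concludes $f(x_{d+1})=g(x_{d+1})$ for infinitely many $x_{d+1}$, and then invokes Lemma 3. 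Note that the paper's choice of ``$d+1$ fixed nodes plus one moving node'' also sidesteps your spacing headache entirely, since only the factors $x_{d+1}-x_i$ with $x_i$ fixed need to be large. Your argument becomes correct if you replace the false independence claim by this two-step structure (vanishing on an infinite set, then Lemma 3).
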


\begin{proof} Suppose $X=\{ x\in U \ |\ |f(x)|\leq C|x|^d \}.$  Fix pairwise distinct $x_0,x_1,\ldots,x_d$ in $X$. Suppose $g(x)=f_{\{x_0,x_1,\ldots,x_d\}}(x)$. Suppose $x_{d+1}\in X$ and $|x_{d+1}|$ is large enough. Then by adding $x_{d+1}$ to the list of inputs, and considering the new interpolation polynomial, we get that 
$$f(x_{d+1})=a\cdot  \prod \limits_{i=0}^{d} (x_{d+1}-x_i)+g(x_{d+1})$$
for some  $a\in \mathbb Z$ that depends on $x_{d+1}$. Now note that as $|x_{d+1}|$ goes to $+\infty$, $|f(x_{d+1})|$ and $|g(x_{d+1})|$ grow at most as a constant multiple of $|x|^d$, while $\prod_{i=0}^{d} (x_{d+1}-x_i)$ grows like $|x|^{d+1}$. Therefore for all  $x_{d+1}$ in $X$ with large enough $|x_{d+1}|$  we must have $a=0$, thus $f(x_{d+1})=g(x_{d+1})$. Thus $f(x)-g(x)=0$ for infinitely many $x$, so by Lemma 3 $f(x)=g(x)$ for all $x$.
\end{proof}

\begin{Corollary} Suppose $f(x)$ is a prime element of $\mathbb Z[x],$ i.e. a prime in $\mathbb Z$ or an irreducible integer polynomial with content 1. Then it is a prime element of $LIP(U)$.
\end{Corollary}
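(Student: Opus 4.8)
The statement falls into two cases: $f=p$ a rational prime, and $f$ an irreducible integer polynomial of content $1$ and degree $\ge 1$. In both, $f$ is a nonzero non‑unit of $LIP(U)$ by Lemma 4, so the content of the corollary is that the ideal $(f)$ is prime, i.e. that $LIP(U)/(f)$ has no zero divisors. The single point at which Theorem 3 enters is the following: $\mathbb Z[x]$ is \emph{factorially closed} in $LIP(U)$, meaning that if $0\neq a\in\mathbb Z[x]$ and $a=bc$ with $b,c\in LIP(U)$, then $b,c\in\mathbb Z[x]$. This is immediate from Theorem 3: $b$ and $c$ are nonzero, and a nonzero LIP function has only finitely many zeros by Lemma 3, so $|c(x)|\ge1$ and hence $|b(x)|\le|a(x)|\le C|x|^{\deg a}$ for all large $|x|$; thus $b\in\mathbb Z[x]$, and symmetrically $c\in\mathbb Z[x]$. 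Taking $a=f$ already shows $f$ is irreducible in $LIP(U)$; but since $LIP(U)$ is not a unique factorization domain — it is not even Noetherian — this does not give primality, and the rest of the proof has to be genuinely arithmetic rather than formal.

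The core of the argument is the case $f=p$, where one must show $LIP(U)/pLIP(U)$ is a domain, and here I would work with Newton's divided differences. By Theorem 1, $p\mid g$ in $LIP(U)$ if and only if every divided difference $g[Y]$ over a finite $Y\subseteq U$ lies in $p\mathbb Z$; and $gh=pk$ for some $k\in LIP(U)$ if and only if $(gh)[x_0,\dots,x_n]\equiv0\pmod p$ for every finite tuple of points of $U$, which by the Leibniz rule for divided differences of a product reads $\sum_{j=0}^{n}g[x_0,\dots,x_j]\,h[x_j,\dots,x_n]\equiv0\pmod p$. Assume for contradiction $p\nmid g$ and $p\nmid h$, and let $\alpha+1$ (resp. $\beta+1$) be the least size of a finite subset of $U$ on which $g$ (resp. $h$) has a divided difference not divisible by $p$; say $\alpha\le\beta$. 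On a tuple of length $\alpha+\beta+1$ every Leibniz term except the middle one has too few nodes on one side to be nonzero mod $p$, so $g[A]\,h[B]\equiv0\pmod p$ whenever $A,B\subseteq U$ have sizes $\alpha+1,\beta+1$ and meet in exactly one point. Choosing a good $A_0$ (with $g[A_0]\not\equiv0$) and a good $B_0$ (with $h[B_0]\not\equiv0$) and, if necessary, sliding a point of $B_0$ onto $A_0$, the recursion $h[(B_0\setminus\{b\})\cup\{a\}]=h[B_0]+(a-b)\,h[B_0\cup\{a\}]$ turns the congruence into constraints on the residues of $A_0,B_0$ modulo $p$; pushing these, together with the fact that the auxiliary functions $x\mapsto g[Y'\cup\{x\}]$ and $x\mapsto h[Y''\cup\{x\}]$ (for fixed finite $Y',Y''$) are themselves LIP and so have only finitely many integer zeros by Lemma 3, should force the contradiction.

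For $\deg f\ge1$ I would reduce to the rational‑prime case. If $f\mid gh$ in $LIP(U)$ and $f\nmid g$, then $f\nmid g_X$ in $\mathbb Z[x]$ for some finite $X\subseteq U$ (otherwise $g/f$ would be integer‑valued on $U$ with $(g/f)_X=g_X/f\in\mathbb Z[x]$ for every $X$, forcing $g/f\in LIP(U)$); since $f$ is primitive and irreducible this gives $\gcd_{\mathbb Q[x]}(f,g_X)=1$ and a Bézout relation $uf+vg_X=c$ with $u,v\in\mathbb Z[x]$, $0\neq c\in\mathbb Z$. The plan is to leverage this — uniformly over a cofinal family of finite sets $X$, or via the ring homomorphisms $LIP(U)\to\mathbb Z[x]/(w_X,f)$, $g\mapsto\overline{g_X}$ (which are ring maps because $(gh)_X\equiv g_Xh_X$ modulo $w_X:=\prod_{x_0\in X}(x-x_0)$, and whose targets are governed by the domain $\mathbb Z[x]/(f)$) — to conclude $f\mid ch$ in $LIP(U)$, and then, using that $\deg f\ge1$ makes $f$ and $c$ share no non‑unit divisor (factorial closedness again), to strip off the rational primes dividing $c$ one at a time and land on $f\mid h$.

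The main obstacle is the combinatorial heart of the $f=p$ case. As the paper's own remark after Theorem 1 warns, the Newton description of $LIP(U)$ is ill‑adapted to multiplication, and here that reappears as a delicate analysis of divided differences modulo $p$ — especially when $U$ is concentrated in few residue classes mod $p$, and when one must upgrade the easy conclusion ``$p\mid g(x)$ for all $x$'' to the much stronger ``$p\mid g$ in $LIP(U)$''. That some nonformal input is unavoidable here is clear, since factorial closedness by itself does not keep primes of a subring prime: for instance $\mathbb Z$ is factorially closed in $\mathbb Z[u,v,t]/(uv-pt)$, yet $p$ is not prime there.
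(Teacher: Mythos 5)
Your first paragraph \emph{is}, in substance, the paper's entire proof: the author observes that if $f=gh$ in $LIP(U)$ then $|g|$ and $|h|$ grow at most polynomially, so $g,h\in\mathbb Z[x]$ by Theorem 3 — and stops there. So your diagnosis is exactly right: what that argument establishes is that $\mathbb Z[x]$ is factorially closed in $LIP(U)$, hence that $f$ is irreducible (an atom) in $LIP(U)$; and your example $\mathbb Z\subset\mathbb Z[u,v,t]/(uv-pt)$ correctly shows that factorial closedness alone does not transport primality from the subring to the overring, so that, read literally, ``prime element'' requires a further argument which the paper does not supply. As a reconstruction of what the paper actually proves, your first paragraph is complete and matches the paper.

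The difficulty is that the remainder of your proposal does not close the gap you identified. For $f=p$ the divided-difference setup is sound: with $\alpha\le\beta$ minimal as you define them, the Leibniz formula does collapse to $(gh)[x_0,\dots,x_{\alpha+\beta}]\equiv g[x_0,\dots,x_\alpha]\,h[x_\alpha,\dots,x_{\alpha+\beta}]\pmod p$, and the node-exchange identity $h[(B_0\setminus\{b\})\cup\{a\}]=h[B_0]+(a-b)\,h[B_0\cup\{a\}]$ is the right tool. But the decisive step — manufacturing from some $A_0$ with $g[A_0]\not\equiv0$ and some $B_0$ with $h[B_0]\not\equiv0$ a pair meeting in exactly one point on which the product of divided differences is still nonzero mod $p$ — is precisely where all the difficulty lives, and ``pushing these \dots should force the contradiction'' is not an argument. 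Note that the exchange identity preserves $h[B_0]\bmod p$ only when the new node is congruent to the old one mod $p$, so for a general infinite $U$ (which may meet some residue classes mod $p$ in finitely many points, or force $A_0$ and $B_0$ to overlap in several points) it is not clear the sliding can be performed at all; you flag this obstacle yourself but do not resolve it. The case $\deg f\ge1$ is likewise only a plan (``the plan is to leverage this \dots to conclude''), and its last step — stripping the rational primes off $c$ — presupposes exactly the unproved $f=p$ case in $LIP(U)$. So as a proof of the literal statement your proposal is incomplete; what it does prove rigorously (irreducibility of $f$ in $LIP(U)$) coincides with what the paper's own two-line proof establishes.
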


\begin{proof} If $f(x)=g(x)h(x)$ then $|g(x)|$ and $|h(x)|$  grow at most polynomially as $|x|$ goes to infinity. So  $g(x)$ and $h(x)$ are in $\mathbb Z [x].$
\end{proof}

\begin{Corollary} $\mathbb Z[x]$ is algebraically closed in $LIP(U)$. 
\end{Corollary}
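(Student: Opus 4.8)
The plan is to deduce this from Theorem 5 by invoking the elementary Cauchy bound on the roots of a polynomial. Suppose $f\in LIP(U)$ is algebraic over $\mathbb Z[x]$; then there is a nonzero polynomial $P(T)=\sum_{i=0}^{n}p_i(x)T^i\in\mathbb Z[x][T]$ with $p_n\not\equiv 0$ such that $\sum_{i=0}^{n}p_i(x)f(x)^i=0$ for every $x\in U$. We may assume $n\geq 1$, since for $n=0$ the relation would read $p_0(x)=0$ for all $x$ in the infinite set $U$, forcing $p_0=0$. Set $d=\max_{0\leq i\leq n}\deg p_i$, and fix a constant $C$ with $|p_i(x)|\leq C|x|^d$ for all $i$ and all $|x|\geq 1$.

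The key observation is that for $x\in\mathbb Z$ the value $p_n(x)$ is an integer, and since $p_n$ is a nonzero polynomial it has only finitely many integer zeros; hence $|p_n(x)|\geq 1$ for all but finitely many $x\in U$. For each such $x$, the complex number $f(x)$ is a root of the degree-$n$ polynomial $P_x(T)=\sum_{i=0}^{n}p_i(x)T^i$, so the standard root bound yields
$$|f(x)|\ \leq\ 1+\max_{0\leq i\leq n-1}\frac{|p_i(x)|}{|p_n(x)|}\ \leq\ 1+C|x|^d\ \leq\ (C+1)|x|^d \qquad(|x|\geq 1).$$
Since $U$ is an infinite subset of $\mathbb Z$, it is unbounded, so this inequality holds for infinitely many $x\in U$. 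By Theorem 5 it follows that $f\in\mathbb Z[x]$, which is precisely the statement that $\mathbb Z[x]$ is algebraically closed in $LIP(U)$.

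I do not expect any real obstacle here: the only care needed is the bookkeeping that discards the finitely many $x\in U$ at which $p_n$ vanishes, and the observation that dividing through by $|p_n(x)|\geq 1$ makes the root bound uniform in $x$, so that the hypothesis of Theorem 5 is met for infinitely many $x$.
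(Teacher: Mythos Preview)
Your argument is correct and is essentially the paper's own approach, only spelled out in full: the paper's proof just says that ``by looking at this equation over $\mathbb R$, the absolute value of $f(x)$ must grow at most polynomially in $|x|$,'' which is exactly what your Cauchy root bound makes precise. One minor point: the growth-bound result you invoke is Theorem~3 in the paper, not Theorem~5.
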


\begin{proof} Suppose $f\in LIP(U)$ satisfies some algebraic equation with coefficients in $\mathbb Z[x].$
Then by looking at this equation over $\mathbb R$, the absolute value of $f(x)$  must grow at most polynomially in $|x|$.
\end{proof}

There is no reason to expect the rings $LIP(U)$ to be Noetherian. Indeed, we have the following result.

\begin{Theorem} Suppose $U$ is an infinite subset of $\mathbb Z$. Then $LIP(U)$ contains an infinite strictly increasing chain of principal ideals.
\end{Theorem}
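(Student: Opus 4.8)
The plan is to exhibit an explicit infinite sequence of LIP functions $f_1, f_2, f_3, \ldots$ together with LIP functions $h_1, h_2, \ldots$ such that $f_n = h_n f_{n+1}$ for all $n$, while no $f_{n+1}$ divides $f_n$ in $LIP(U)$; this gives the strictly increasing chain $(f_1) \subsetneq (f_2) \subsetneq \cdots$. The natural candidates are built from the basis functions of Theorem 2. Fix a bijection $\sigma\colon \mathbb{N} \to U$ and set $f_n(x) = \prod_{i=n}^{\infty} (x - \sigma(i))$ interpreted via the Theorem 2 series (all but finitely many factors vanish at each point of $U$), or more concretely let $f_n$ be the function whose value at $\sigma(m)$ is $\prod_{i=n}^{m-1}(\sigma(m)-\sigma(i))$ for $m > n$ and $0$ for $m \le n$. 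Then $f_n = (x - \sigma(n)) \cdot f_{n+1}$ pointwise on $U$, so $(f_n) \subseteq (f_{n+1})$.

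The key step is to show these inclusions are strict, i.e. that $f_{n+1} \notin (f_n)$. Suppose $f_{n+1} = g \cdot f_n$ for some $g \in LIP(U)$. Evaluating at $\sigma(m)$ for large $m$: $f_n(\sigma(m))$ and $f_{n+1}(\sigma(m))$ are both huge products, and their ratio is exactly $\sigma(m) - \sigma(n)$, so we would need $g(\sigma(m)) = 1/(\sigma(m) - \sigma(n))$, which is not an integer for $|m|$ large. This is the crux: the factor $(x-\sigma(n))$ genuinely divides $f_n$ and cannot be cancelled inside the ring. One must be slightly careful that $f_n$ is not the zero function — but $f_n(\sigma(m)) \ne 0$ for all $m > n$ since $\sigma$ is injective, so $f_n$ vanishes only on the finite set $\{\sigma(1),\ldots,\sigma(n)\}$ and in particular $f_n \ne 0$ by being nonzero somewhere (or directly: it is nonzero at infinitely many points).

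I anticipate the main obstacle is purely bookkeeping: verifying that the proposed $f_n$ are genuinely LIP (immediate from Theorem 2, since each is a legitimate series $\sum_k a_k \prod_{i=1}^k(x-\sigma(i))$ with integer — in fact $0/1$-type — coefficients, after reindexing) and that the divisibility relation holds at the level of functions on all of $U$, including the points where $f_n$ vanishes (where both sides are $0$, so the relation is trivially satisfied). There is also a minor subtlety in the non-divisibility argument if $U$ is bounded below or above, but since $U$ is infinite, $|\sigma(m)|$ is unbounded, so infinitely many $m$ have $|\sigma(m) - \sigma(n)| > 1$, which is all that is needed to force $g(\sigma(m)) \notin \mathbb{Z}$ and reach a contradiction. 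Assembling these observations yields the strictly increasing chain $(f_1) \subsetneq (f_2) \subsetneq (f_3) \subsetneq \cdots$ of principal ideals in $LIP(U)$.
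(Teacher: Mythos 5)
There is a genuine gap, and it is exactly at the point you dismiss as bookkeeping: your functions $f_n$ are not LIP. Take $U=\mathbb N$ and $\sigma=\mathrm{id}$; then your $f_1$ has value $\prod_{i=1}^{m-1}(m-i)=(m-1)!$ at each $m>1$, so $f_1(2)=1$ and $f_1(4)=6$, and $4-2=2$ does not divide $6-1=5$. Already the two-point divisibility condition fails, so $f_1\notin LIP(\mathbb N)$. The claim that this is ``immediate from Theorem 2 \dots after reindexing'' confuses two different things: a series $\sum_k a_k\prod_{i=1}^k(x-\sigma(i))$ evaluated at $\sigma(m)$ is the sum of \emph{all} terms with $k<m$, whereas your $f_n(\sigma(m))$ keeps only a single product truncated at $i=m-1$; that ``diagonal'' function is not of the form in the representation theorem. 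The untruncated product $\prod_{i=n}^{\infty}(x-\sigma(i))$ is no better: at every $\sigma(m)$ with $m\geq n$ one factor is $0$, so it is identically zero on the tail of $U$ and divergent on the finitely many remaining points. So the chain you build does not live in $LIP(U)$.

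The overall strategy --- an infinite product $f_n$ with $f_n=h_n\cdot f_{n+1}$ for a non-unit $h_n$, plus a cancellation argument for strictness --- is the right one and is what the paper does; what is missing is a choice of factors that are eventually $1$ at each point rather than eventually $0$. The paper takes $f_m(x)=\prod_{k=m}^{\infty}\bigl(1+\prod_{i=1}^{k}(x-\sigma(i))\bigr)$: at $x=\sigma(j)$ every factor with $k\geq j$ equals $1$, so on each initial segment $\sigma(\{1,\dots,K\})$ the function $f_m$ agrees with an integer polynomial, and Lemma 2 gives $f_m\in LIP(U)$. The identity $f_m=\bigl(1+\prod_{i=1}^{m}(x-\sigma(i))\bigr)\cdot f_{m+1}$ yields the inclusions $(f_m)\subseteq(f_{m+1})$, and strictness follows by essentially your cancellation argument (using that $LIP(U)$ is a domain whose only units are $\pm1$), applied to the non-unit factor $1+\prod_{i=1}^{m}(x-\sigma(i))$ in place of $x-\sigma(n)$.
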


\begin{proof} While all LIP functions can be expressed as formal infinite sums, some can actually be expressed as formal infinite products. Specifically, for any bijection $\sigma \colon \mathbb Z \to U$ and for all $m\in \mathbb N$ consider the functions
$$f_m(x)=\prod \limits_{k=m}^{\infty} \left(1+\prod \limits_{i=1}^k (x-\sigma(i)) \right).$$

Clearly, $f_m\in LIP(U)$ for all $m$, and we have an infinite strictly increasing chain of principal ideals
$$(f_1)\subset (f_2) \subset \cdots \subset (f_m)\subset \cdots.$$
\end{proof}

\begin{Remark} It seems plausible that the functions $f_m$ cannot be written as products of finitely many primes (irreducibles), which would mean that $LIP(U)$ are never atomic (\cite {AAZ}). However, we do not have a proof at this time.
\end{Remark}

\section{LIP functions on $\mathbb Z$}

In this section we study $LIP(\mathbb Z)= L\mathbb Z[x].$ As noted in the previous section, $L\mathbb Z[x]$ is a commutative ring with identity. It contains $\mathbb Z[x]$ and is contained in $LIP(U)$ for any infinite $U\subseteq \mathbb Z$. We are primarily concerned with the behavior of LIP functions w.r.t. the ``infinite prime'', i.e. the regular absolute value. A very similar theory can also be developed for $LIP(\mathbb N)$. 

The following two properties are specific for $L\mathbb Z[x]$

\begin{Theorem}
1)  $L\mathbb Z[x]$ is closed under the operation of composition of functions.

2)  $L\mathbb Z[x]$ is closed under the (left) discrete derivative operator: if $f(x) \in L\mathbb Z[x]$, then $f(x)-f(x-1) \in L\mathbb Z[x]$.

\end{Theorem}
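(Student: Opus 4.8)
The plan is to prove both parts directly from the definition of LIP, using the characterization of LIP functions on a finite set $X$ as those whose interpolation polynomial $f_X(x)$ lies in $\mathbb Z[x]$. For part (1), given $f,g \in L\mathbb Z[x]$, I want to show $f\circ g \in L\mathbb Z[x]$. Fix a finite set $X = \{x_0,\ldots,x_d\}\subset\mathbb Z$; I must show $(f\circ g)_X(x)\in\mathbb Z[x]$. The natural move is to let $Y = g(X) = \{g(x_0),\ldots,g(x_d)\}$, a finite subset of $\mathbb Z$ (possibly with fewer than $d+1$ distinct elements). Since $g\in L\mathbb Z[x]$, the interpolation polynomial $g_X(x)\in\mathbb Z[x]$; since $f\in L\mathbb Z[x]$, the interpolation polynomial $f_Y(y)\in\mathbb Z[x]$. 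Then the composition $f_Y(g_X(x))$ is a polynomial in $\mathbb Z[x]$, and for each $x_i\in X$ we have $f_Y(g_X(x_i)) = f_Y(g(x_i)) = f(g(x_i)) = (f\circ g)(x_i)$, since $g(x_i)\in Y$. So $f_Y(g_X(x))$ is an integer polynomial agreeing with $f\circ g$ on all of $X$; by Lemma 2, the interpolation polynomial $(f\circ g)_X$ must itself lie in $\mathbb Z[x]$. Hence $f\circ g\in L\mathbb Z[x]$.

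For part (2), set $h(x) = f(x) - f(x-1)$ for $f\in L\mathbb Z[x]$; I must show $h\in L\mathbb Z[x]$. Fix a finite $X = \{x_0,\ldots,x_d\}\subset\mathbb Z$ and enlarge it to $X' = X \cup (X-1) = \{x_0,\ldots,x_d,x_0-1,\ldots,x_d-1\}$, a finite set. Since $f\in L\mathbb Z[x]$, the interpolation polynomial $f_{X'}(x)\in\mathbb Z[x]$. Consider $p(x) = f_{X'}(x) - f_{X'}(x-1) \in \mathbb Z[x]$ (note $x-1$ substituted into an integer polynomial still gives an integer polynomial). For each $x_i\in X$, both $x_i$ and $x_i-1$ lie in $X'$, so $p(x_i) = f_{X'}(x_i) - f_{X'}(x_i-1) = f(x_i) - f(x_i-1) = h(x_i)$. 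Thus $p(x)$ is an integer polynomial agreeing with $h$ on $X$, and again Lemma 2 forces $h_X\in\mathbb Z[x]$. Hence $h\in L\mathbb Z[x]$.

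The only subtlety — and the place where a little care is needed rather than a genuine obstacle — is the handling of repeated values. In part (1), $g$ need not be injective on $X$, so $Y = g(X)$ may have cardinality $< d+1$; but $f_Y$ is still well defined as the unique interpolation polynomial of degree $< |Y|$ with integer coefficients (its membership in $\mathbb Z[x]$ follows because $f\in L\mathbb Z[x]$ applied to the finite set $Y$), and the verification $f_Y(g(x_i)) = f(g(x_i))$ only uses $g(x_i)\in Y$, which is automatic. Similarly, if $g$ is constant the composition is a constant integer, trivially in $\mathbb Z[x]$. In part (2) the enlarged set $X'$ always contains $X$ and $X-1$ regardless of overlaps, so no issue arises. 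In both parts the engine is the same: exhibit \emph{some} integer polynomial matching the candidate function on the chosen finite set, then invoke Lemma 2 to conclude the minimal-degree interpolation polynomial is already integral. I expect no real difficulty; the argument is a routine but clean application of the closure properties established earlier.
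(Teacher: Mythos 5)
Your proof is correct and part (1) is exactly the paper's argument (exhibit $f_Y(g_X(x))$ with $Y=g(X)$ and conclude via Lemma 2), with welcome extra care about non-injectivity of $g$ on $X$. For part (2) the paper instead deduces the result from part (1) applied to $g(x)=x-1$ together with closure of $L\mathbb Z[x]$ under subtraction, whereas you redo the interpolation directly on $X\cup(X-1)$; both are valid and essentially the same computation.
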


\begin{proof} 1) If $f(x)$ and $g(x)$ are in  $L\mathbb Z[x]$ then the restriction of their composition $f(g(x))$ to any finite set $X$ is given by the polynomial $f_Y(g_X(x))$, where $Y=g(X)$.

2) Suppose $f(x)$ is in $L\mathbb Z[x]$. From the first part of the theorem, $f(x-1)$ is also in $ L\mathbb Z[x].$ So the same is true for the difference $f(x)-f(x-1)$.
\end{proof}



By Theorem 3, if for $f\in L\mathbb Z[x]$ the absolute value of $f(x)$ grows at most polynomially on some infinite subset of $\mathbb Z$, then $f$ is just a polynomial. 
In some sense, this result is tight, as the next theorem shows.

\begin{Theorem} If $\tau: \mathbb N \to \mathbb R$ is any sequence with $\lim \limits_{n\to +\infty} \tau (n)= +\infty,$ then there exists $f\in L\mathbb Z[x] \setminus \mathbb Z[x]$  such that $|f(x)|<|x|^{\tau (|x|)}$ for all $x$ in some infinite  $X \subseteq \mathbb Z$.
\end{Theorem}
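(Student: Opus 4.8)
The plan is to build $f$ explicitly via the description in Theorem~1, choosing the bijection $\sigma\colon\mathbb N\to\mathbb Z$ and the coefficient sequence $(a_k)$ together by a single recursion. The key observation is that if one evaluates the series $f(x)=\sum_k a_k\prod_{i=1}^k(x-\sigma(i))$ at $x=\sigma(n)$, only the first $n$ terms survive, so $f(\sigma(n))=f_{n-1}(\sigma(n))+a_{n-1}\prod_{i=1}^{n-1}(\sigma(n)-\sigma(i))$, where $f_{n-1}$ is the truncation fixed before stage $n$. If $\sigma(n)$ is chosen astronomically large, then $f_{n-1}(\sigma(n))$ is negligible against the last term, whose size is of order $\sigma(n)^{n-1}$; and $\sigma(n)^{n-1}$ is tiny compared with $\sigma(n)^{\tau(\sigma(n))}$ as soon as $\sigma(n)$ is so large that $\tau(\sigma(n))>n$, which is possible since $\tau\to+\infty$. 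Performing such a ``jump'' at infinitely many stages, and collecting the corresponding values of $\sigma$ into $X$, should do it.

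Concretely: fix an enumeration $z_1,z_2,\ldots$ of $\mathbb Z$ (say $0,1,-1,2,-2,\ldots$) and call the even stages $n=2,4,6,\ldots$ \emph{special}. At a non-special stage $n$ set $a_{n-1}=0$ and let $\sigma(n)$ be the first $z_m$ not yet used; this forces $\sigma$ to be surjective, hence (with injectivity, which holds by construction) a bijection. At a special stage $n$ set $a_{n-1}=1$ and let $\sigma(n)$ be a positive integer $N$, not previously used, chosen large enough that $N\ge 2\max_{i<n}|\sigma(i)|$, $N>2^{n}$, and $\tau(N)>n$; such $N$ exists since each requirement excludes only finitely many positive integers (the last one because $\lim_{m\to\infty}\tau(m)=+\infty$). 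By Theorem~1 the function $f(x)=\sum_{k}a_k\prod_{i=1}^k(x-\sigma(i))$ lies in $LIP(\mathbb Z)$, and since infinitely many $a_k$ equal $1$ its coefficient sequence is not eventually zero; as the sequence determines $f$ uniquely and that of any element of $\mathbb Z[x]$ is eventually zero, we get $f\notin\mathbb Z[x]$.

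Finally, at a special stage $n$ put $N=\sigma(n)$. Since $a_k\in\{0,1\}$ and $N\ge 2|\sigma(i)|$ for $i<n$, each factor $N-\sigma(i)$ lies in $(0,3N/2]$, so $\prod_{i=1}^{n-1}(N-\sigma(i))\le(3N/2)^{n-1}$ and $|f_{n-1}(N)|\le\sum_{k=0}^{n-2}(3N/2)^k\le(n-1)(3N/2)^{n-2}$. Combining these and using $N>2^{n}$ gives $|f(\sigma(n))|\le 2^{n}N^{n-1}<N^{n}\le N^{\tau(N)}=|\sigma(n)|^{\tau(|\sigma(n)|)}$, the last step because $\tau(N)>n$ and $N\ge2$. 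Taking $X=\{\sigma(n):n\text{ special}\}$, which is infinite since the chosen $N$'s tend to $+\infty$, completes the argument. The only delicate point is the bookkeeping that keeps $\sigma$ a bijection while inserting the jumps; the growth estimate itself is routine, precisely because the representation in Theorem~1 makes evaluation at $\sigma(n)$ depend only on the first $n$ coefficients.
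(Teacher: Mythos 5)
Your proof is correct and follows essentially the same strategy as the paper's: a lacunary Newton-type series whose terms beyond a given index vanish at a sparsely chosen evaluation point, with that point pushed far enough out that $\tau$ there exceeds the degree of the surviving partial sum. The paper packages this more compactly with the symmetric products $\prod_{|j|\le c_i}(x-j)$ evaluated at $x=c_{k+1}-1$ (no custom bijection needed), but the mechanism and the estimates are the same.
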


\begin{proof} For every strictly increasing sequence of natural numbers $\{c_1,c_2,\ldots \}$ consider the LIP function
$$f(x)=\sum \limits_{i=1}^{\infty} \prod \limits_{|j|\leq c_i} (x-j)$$
and $X=\{c_{k+1}-1\ |\ k\in \mathbb N\}.$

Then for every $x=c_{k+1}-1 \in X$ 
$$|f(x)|\!=\!|\sum \limits_{i=1}^k  \prod \limits_{|j|\leq c_i} \! (x-j) |\!\leq \! k\! \prod \limits_{|j|\leq c_k}\! |x-j|\!\leq \! k(c_{k+1}-1+c_k)^{2c_k+1}\!\leq\!(2x)^{2c_k+2}.$$

Note that this is less that $x^{4c_k+4}$. So for a given $\tau$ we will choose $c_1=1$ and then for each $k$ we will be able to choose $c_{k+1}>c_k$ so that $\tau(c_{k+1}-1)>4c_k+4$. Clearly, $|f(x)|<|x|^{\tau (|x|)}$ for all $x\in X,$ the set $X$ is infinite, and $f\notin \mathbb Z[x].$
\end{proof}

\begin{Remark} The above theorem also holds for $LIP(U)$ for arbitrary infinite $U$, simply because $L\mathbb Z[x]\subseteq LIP(U)$.
\end{Remark}

On the other hand, Theorem 3 does admit a non-trivial strengthening, if instead of looking at some infinite $X\subseteq \mathbb Z$ we look at the entire $\mathbb Z.$ For this, it is convenient to first prove a lemma.

\begin{Lemma} Suppose $f\in \mathbb Z[x]$ and $\deg f=d.$ Then $|f(x)|\geq \frac{d!}{2^d}$ for some $x\in \{0,1,\dots,d\}$. 
\end{Lemma}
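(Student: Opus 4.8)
The plan is to use finite differences. Recall that for $f \in \mathbb{Z}[x]$ of degree $d$ with leading coefficient $a_d$, the $d$-th finite difference $\Delta^d f$ is the constant $d!\, a_d$, and in particular $|\Delta^d f| \geq d!$ since $a_d$ is a nonzero integer. On the other hand, the $d$-th finite difference can be written as the alternating sum $\Delta^d f(0) = \sum_{k=0}^{d} (-1)^{d-k} \binom{d}{k} f(k)$, which only involves the values $f(0), f(1), \dots, f(d)$.

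First I would set $M = \max_{0 \leq x \leq d} |f(x)|$ and apply the triangle inequality to the alternating-sum expression for $\Delta^d f(0)$. This gives
$$
d! \leq |\Delta^d f(0)| \leq \sum_{k=0}^{d} \binom{d}{k} |f(k)| \leq M \sum_{k=0}^{d} \binom{d}{k} = M \cdot 2^d.
$$
Rearranging yields $M \geq d!/2^d$, which is exactly the claim: some $x \in \{0, 1, \dots, d\}$ satisfies $|f(x)| \geq d!/2^d$.

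The only facts I need are that the $d$-th finite difference of a degree-$d$ integer polynomial equals $d!\, a_d$ (a classical identity, provable by induction on $d$ using that $\Delta$ lowers degree by exactly one and multiplies the leading coefficient by the degree) and the binomial identity $\sum_{k=0}^d \binom{d}{k} = 2^d$. There is essentially no obstacle here; the only point requiring a line of care is recording why $|\Delta^d f(0)| \geq d!$, namely that $a_d$ is a nonzero integer because $\deg f = d$. If one prefers to avoid invoking the finite-difference formula as a black box, one can instead write $f$ in the basis of binomial coefficients $\binom{x}{j}$ and track the leading term, but the alternating-sum computation above is the most direct route.
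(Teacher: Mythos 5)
Your proof is correct and uses essentially the same idea as the paper: both arguments rest on the fact that the $d$-th finite difference of $f$ on $\{0,1,\dots,d\}$ equals $d!$ times the (nonzero integer) leading coefficient, and both extract the factor $2^d$ from bounding that difference by the maximum of $|f|$ — the paper by iterating the at-most-doubling of the bound under each application of $\delta$, you in one step via $\sum_k \binom{d}{k}=2^d$.
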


\begin{proof} Consider the polynomial $g(x) $ of degree at most $(d-1)$ such that $g(x)=f(x)$ for all $x\in \{0,1,\dots,d-1\}$. By Lemma 2, $g\in \mathbb Z[x]$. Clearly, $f(x)=g(x)+ax(x-1)\dots(x-d+1)$ for some integer $a$. Since $\deg f =d,$ we have $a\neq 0.$ Consider the (left) discrete derivative operator:  $(\delta f)(x)=f(x)-f(x-1).$ For every $r<s\in \mathbb N$ this can be defined on functions on $\{r,\dots,s\}$ with outputs in functions on $\{r+1,\dots,s\}.$  So its $d$-th composition power, $\delta ^d$, applied to $f(x)$ on $\{0,\dots,d\}$, gives a function on $\{d\}$. By a standard calculation, $\delta ^d (g)=0$ and $(\delta ^d f)(d)=d!\cdot a$. If for all $x$ in $\{0,\dots,d\}$ we have $|f(x)|<\frac{d!}{2^d},$ then for all $x$ in $\{1,\dots,d\}$ we must have $|(\delta f)(x)|<\frac{d!}{2^{d-1}},$ and so on. At the end, $|(\delta ^d f)(d)| < d!$, a contradiction with $a\neq 0$.
\end{proof}


\begin{Theorem} Suppose $f(x)\in L\mathbb Z[x]$ and for all but finitely many $x\in \mathbb Z$ 
$$|f(x)| < \frac{(2|x|-1)!}{2^{2|x|-1}}.$$
Then $f\in \mathbb Z[x]$.
\end{Theorem}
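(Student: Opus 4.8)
The plan is to reduce the claim to Theorem 3 by showing that if $f$ is \emph{not} a polynomial, then $|f(x)|$ must eventually exceed the stated bound $\frac{(2|x|-1)!}{2^{2|x|-1}}$ for at least some $x$ of each large absolute value — in fact, the bound has been tailored precisely so that Lemma 5 applies. First I would use Theorem 1 to write $f$ via the interpolation data on the symmetric interval $\{-n,-n+1,\dots,n\}$: let $g_n(x)=f_{\{-n,\dots,n\}}(x)$ be the interpolation polynomial of degree at most $2n$. If $f\notin\mathbb Z[x]$, then $f\neq g_n$ for all $n$ (since $g_n$ has bounded degree but $f$ does not agree with any fixed polynomial on all of $\mathbb Z$, by Lemma 3), so for each $n$ there is a first point outside $\{-n,\dots,n\}$ where $f$ and $g_n$ disagree; equivalently, the interpolation polynomial $g_{n+1}$ on $\{-(n+1),\dots,n+1\}$ differs from $g_n$, and $g_{n+1}(x)=g_n(x)+a_n\prod_{|j|\le n}(x-j)$ with $a_n\in\mathbb Z$, $a_n\neq 0$ for infinitely many $n$.

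The key step is to apply Lemma 5 to the polynomial $g_{n+1}$, which has degree exactly $2n+1$ whenever $a_n\neq 0$. Lemma 5 says $|g_{n+1}(x)|\ge \frac{(2n+1)!}{2^{2n+1}}$ for some $x\in\{0,1,\dots,2n+1\}$; however, I want a conclusion about $f$ itself, and $f$ agrees with $g_{n+1}$ only on $\{-(n+1),\dots,n+1\}$, not on $\{0,\dots,2n+1\}$. So the correct move is to apply Lemma 5 in the shifted/recentred form: $g_{n+1}$ restricted to the interval of $2n+2$ consecutive integers $\{-(n+1),\dots,n+1\}$ still has leading coefficient $a_n$ with $|a_n|\ge 1$, and the proof of Lemma 5 — via iterating the discrete derivative $\delta$ and the bound $|\delta^{2n+1}g_{n+1}|\ge (2n+1)!$ — works verbatim on \emph{any} block of $2n+2$ consecutive integers. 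Hence there exists $x$ with $|x|\le n+1$, i.e. $2|x|-1\le 2n+1$, at which $|f(x)|=|g_{n+1}(x)|\ge \frac{(2n+1)!}{2^{2n+1}}\ge \frac{(2|x|-1)!}{2^{2|x|-1}}$ (using monotonicity of $k\mapsto \frac{k!}{2^k}$ for $k\ge 2$). Since $a_n\neq 0$ for infinitely many $n$, this produces infinitely many $x\in\mathbb Z$ violating the strict inequality $|f(x)|<\frac{(2|x|-1)!}{2^{2|x|-1}}$ — contradicting the hypothesis that it holds for all but finitely many $x$. Therefore $f\in\mathbb Z[x]$.

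The main obstacle I anticipate is the bookkeeping around \emph{where} Lemma 5 forces $f$ to be large: one must be careful that the interval on which one invokes the discrete-derivative argument is exactly an interval on which $f$ and the interpolation polynomial $g_{n+1}$ coincide, namely $\{-(n+1),\dots,n+1\}$, so that the lower bound transfers from $g_{n+1}$ to $f$. This is why the symmetric choice of interpolation nodes is essential, and why the bound in the statement is phrased in terms of $2|x|-1$ rather than $|x|$: a point $x$ with $|x|\le n+1$ lies in a block of $2n+2 = 2(n+1)$ consecutive integers, matching the degree $2n+1$ of $g_{n+1}$. A secondary, purely routine point is checking the monotonicity estimate $\frac{(2n+1)!}{2^{2n+1}}\ge \frac{(2m-1)!}{2^{2m-1}}$ for $m\le n+1$, which follows since each successive ratio multiplies by a factor $\ge 1$.
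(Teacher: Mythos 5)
Your overall strategy is the same as the paper's: interpolate $f$ on growing symmetric blocks around $0$, observe that a non-polynomial $f$ forces the interpolant's degree to jump to (near) the maximum infinitely often, and then invoke the shifted form of Lemma 5 plus the monotonicity of $k\mapsto k!/2^k$ to manufacture infinitely many $x$ violating the bound. That is exactly the paper's argument, including the observation that Lemma 5 works on any block of consecutive integers and the final point that the lower bounds tend to infinity (which is what guarantees the violating $x$'s are infinitely many distinct integers --- you assert this but should justify it the same way).

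There is, however, one step that is false as written. Passing from $\{-n,\dots,n\}$ to $\{-(n+1),\dots,n+1\}$ adds \emph{two} interpolation nodes, so the correct decomposition is
$$g_{n+1}(x)=g_n(x)+(a_nx+b_n)\prod_{|j|\le n}(x-j),$$
with a \emph{linear} correction term, not a constant $a_n$; consequently your claim that $\deg g_{n+1}=2n+1$ exactly whenever the interpolant changes is unjustified --- the degree may be $2n+2$. This matters because Lemma 5 requires knowing the exact degree $d$ and a block of $d+1$ consecutive integers on which $f$ agrees with the interpolant. The gap is repairable in two ways: (i) do what the paper does and add nodes one at a time via the enumeration $0,1,-1,2,-2,\dots$, so that whenever the degree increases it jumps to the full degree $i-1$ on an $i$-element set contained in $\{-\lceil i/2\rceil,\dots,\lceil i/2\rceil\}$; or (ii) keep your two-at-a-time scheme but note that whenever $g_{n+1}\neq g_n$ the degree of $g_{n+1}$ is either $2n+1$ or $2n+2$, and in both cases $\{-(n+1),\dots,n+1\}$ (with $2n+3$ elements) contains a block of $\deg g_{n+1}+1$ consecutive integers on which $f=g_{n+1}$, so Lemma 5 still yields $|f(x)|\ge\frac{(2n+1)!}{2^{2n+1}}$ for some $|x|\le n+1$. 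With either repair your proof closes correctly.
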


\begin{proof} Consider the ``standard'' bijection  $\sigma : \mathbb N \to \mathbb Z$:
$$\sigma (k)=\left\{ \begin{array}{l}
0, \ \textrm{if}\  k=1\\
\frac{k}{2}, \ \textrm{if}\ k \ \textrm{is even}\\
-\frac{k-1}{2}, \ \textrm{if}\ k \geq 3\ \textrm{is odd}
\end{array}\right.$$

Denote the corresponding subsets $\sigma (\{1,2,\ldots,i\})$ by $X_i.$ Define $d_i$ to be the degree of the interpolation polynomial $f_i(x)$ for $f(x)$ on $X_i$. Clearly, $d_{i-1}\leq d_i$

Suppose $f\notin \mathbb Z[x].$ Then $\lim (d_i)=+\infty $ and $d_{i-1} < d_i$  for infinitely many $i$. We will prove that $d_i=i-1$ for all such $i$. Indeed, $f_i(x)=f_{i-1}(x) +a \prod \limits_{j\in X_{i-1}} (x-j)$. Since  $d_{i-1} < d_i$, we  have $a\neq 0$. Because $d_{i-1}\leq |X_{i-1}|-1,$ we have $d_i=|X_{i-1}|=i-1$. For such $i$, we can apply Lemma 5 to a suitable shift of $f_i,$ to get that $\max \limits_{j\in X_i} |f(j)| \geq \frac{(i-1)!}{2^{i-1}}$.  So for each $i$ from some infinite subset of $\mathbb N$ we get a $j\in X_i$ such that $|f(j)| \geq \frac{(i-1)!}{2^{i-1}}.$  Note  that $i\geq 2|j|$ for every $j\in X_i,$ so for all $|j| \geq 1$ we have $|f(j)|\geq \frac{(2|j|-1)!}{2^{2|j|-1}}$.  Note also that each $j$ can only work for finitely many $i$-s. Thus, if $f\notin \mathbb Z[x],$ we get an infinite set of $j$-s  for which $|f(j)|\geq \frac{(2|j|-1)!}{2^{2|j|-1}}$.
\end{proof}

The next example shows that the above bound cannot be improved much.

\begin{Example} Consider $f(x)=\sum \limits_{i=0}^{\infty}(-1)^i \prod \limits_{|j|\leq i} (x-j)$. Then $|f(x)|\leq (2|x|-1)!$ for all $x\in \mathbb Z.$
\end{Example}

\begin{proof} For $x=0$ the inequality is $0\leq 0.$ Since $f$ is clearly an odd function, it is enough to consider $x>0$. Then 
$$f(x)=(-1)^x\cdot [ (2x-1)\cdot \ldots \cdot 1 - (2x-2)\cdot \ldots \cdot 2+(2x-3)\cdot \ldots \cdot 3-\ldots+(-1)^x x].$$

Since the absolute value of the first term is $(2x-1)!$ and the absolute values of the terms in the alternating sum are decreasing, we get the desired inequality.
\end{proof}

\section{LIP continuation and obstructions to LIPness}

We begin with a natural definition.

\begin{Definition}
Suppose $U$ and $V$ are infinite subsets of $\mathbb Z$. If $f\in LIP(U)$ is a restriction of $g\in LIP(V)$ to $U,$ then we say that $g$ is a LIP continuation of $f$ from $U$ to $V$.
\end{Definition}

\begin{Remark} LIP continuation does not always exist. But if it exists, it is unique by Corollary 1.
\end{Remark}

It is natural to ask if there exist LIP functions on $U$ that can be LIP continued to $U\sqcup \{a\}$ and $U\sqcup \{b\}$, but not to $U\sqcup \{a,b\}$. Because of the uniqueness of the LIP continuation, it is a question of finding a function on $U\sqcup \{a,b\}$ which is not LIP, but whose restrictions to  $U\sqcup \{a\}$ and $U\sqcup \{b\}$ are LIP. 

We are going to construct such infinite $U$ by ``growing'' it from the following finite example.

\begin{Example} Suppose $U=\{0\}$, $a=-1,$ $b=1$. Consider $f\colon \{-1,0,1\}\to \mathbb Z$ given by the values $\{0,0,1\}$ respectively. It is given by an integer-valued non-integer polynomial $f(x)=\frac{x(x+1)}{2}$, but its restrictions to $\{-1,0\}$ and $\{0,1\}$ can be given by integer polynomials $0$ and $x$ respectively.
\end{Example}

\begin{Theorem} Suppose $x_1,\dots,x_n,a,b$ are distinct integers, and $f(x)\in \mathbb Z[x]$ is an integer-valued non-integer polynomial such that its restrictions to  $\{x_1,\dots,x_n,a\}$ and  $\{x_1,\dots,x_n,b\}$ can be  given by integer polynomials. Then

1) $|a-b|\geq 2$

2) Denote by $V$ the set of all integers $x$, such that $gcd(x-a,a-b)=1.$ There exists a LIP function on the union $U$ of $\{x_1,\dots,x_n\}$ and $V$ that can be LIP continued to  $U \sqcup \{a\}$ and $U\sqcup \{b\}$, to coincide with $f(x)$ on $\{x_1,\dots,x_n,a,b\}$.
\end{Theorem}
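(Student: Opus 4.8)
The plan is to prove part 1) directly and then build the infinite set $U$ for part 2) by an inductive extension procedure, at each stage adjoining one new integer while preserving the three key LIP properties: that $f$ restricted to the current ground set minus $\{a,b\}$ together with $a$ is an integer polynomial, the same with $b$, and that $f$ itself is not an integer polynomial on the whole current set. For part 1), suppose $|a-b|=1$. Write $p_a(x)$ and $p_b(x)$ for the integer interpolation polynomials of $f$ on $\{x_1,\dots,x_n,a\}$ and $\{x_1,\dots,x_n,b\}$ respectively; both agree with $f$ on $\{x_1,\dots,x_n\}$, so $p_a-p_b$ is divisible by $\prod_{i=1}^n(x-x_i)$, say $p_a(x)-p_b(x)=c\prod_{i=1}^n(x-x_i)$ with $c\in\mathbb Z$ (the degree bound forces $p_a-p_b$ to have degree $\le n$). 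Now $f=f_X$ on $X=\{x_1,\dots,x_n,a,b\}$ has degree $\le n+1$, and one computes $f(x)-p_a(x)$ is divisible by $\prod_{i=1}^n(x-x_i)\cdot(x-a)$ while $f(x)-p_b(x)$ is divisible by $\prod_{i=1}^n(x-x_i)\cdot(x-b)$. Writing $f(x)=p_a(x)+\alpha\prod_{i}(x-x_i)(x-a)$ and $f(x)=p_b(x)+\beta\prod_i(x-x_i)(x-b)$ with $\alpha,\beta\in\mathbb Q$ (equal, being the leading coefficient of $f$ beyond degree $n$), subtracting gives $c\prod_i(x-x_i)=\alpha\prod_i(x-x_i)\big((x-a)-(x-b)\big)=\alpha(b-a)\prod_i(x-x_i)$, so $\alpha(b-a)=c\in\mathbb Z$; if $|a-b|=1$ this forces $\alpha\in\mathbb Z$, hence $f=p_a+\alpha\prod_i(x-x_i)(x-a)\in\mathbb Z[x]$, contradicting that $f$ is a non-integer polynomial. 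So $|a-b|\ge 2$.

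For part 2), the key finite lemma to isolate first is: \emph{given distinct integers $x_1,\dots,x_n,a,b$ and a non-integer polynomial $f$ whose restrictions to $\{x_1,\dots,x_n,a\}$ and $\{x_1,\dots,x_n,b\}$ are integer polynomials, and given any integer $y\notin\{x_1,\dots,x_n,a,b\}$ with $\gcd(y-a,a-b)=1$, the restrictions of $f$ to $\{x_1,\dots,x_n,y,a\}$ and $\{x_1,\dots,x_n,y,b\}$ are still integer polynomials, and $f$ is still a non-integer polynomial on the enlarged set.} Granting this, I would enumerate $V$ (the set of $x$ with $\gcd(x-a,a-b)=1$, which is infinite since it contains a full residue class mod $|a-b|$) as $y_1,y_2,\dots$, set $U_0=\{x_1,\dots,x_n\}$ and $U_k=U_{k-1}\cup\{y_k\}$, and apply the lemma repeatedly; the non-integrality is witnessed throughout by the fact that the interpolation polynomial of $f$ on any finite set containing $\{x_1,\dots,x_n,a,b\}$ has degree $\ge n+1$ with a fixed non-integer leading-type coefficient (the $\alpha$ above with $\alpha\notin\mathbb Z$ since $\alpha=c/(b-a)$ and $c$ need not be divisible by $b-a$ — this is exactly where $|a-b|\ge 2$ is used, and it is why $f\notin\mathbb Z[x]$ is possible at all). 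The function $g$ on $U=\bigcup_k U_k=\{x_1,\dots,x_n\}\cup V$ defined by $g=f$ pointwise is then LIP: any finite $X\subseteq U$ is contained in some $U_k\cup\{a,b\}$ minus $\{a,b\}$, and we must check the interpolation polynomial of $f$ on $X$ alone (not containing $a$ or $b$) is integral — this follows because $X$ sits inside $\{x_1,\dots,x_n,y_{i_1},\dots,y_{i_r},a\}$ and we can run the divided-difference/telescoping argument removing $a$. Finally $g$ LIP-continues to $U\cup\{a\}$ and $U\cup\{b\}$ by taking the value $f(a)$, resp. $f(b)$, and checking LIPness of these extensions the same way.

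The main obstacle is proving the finite lemma — specifically, showing that adjoining $y$ with $\gcd(y-a,a-b)=1$ does not destroy integrality of the interpolation polynomials on $\{x_1,\dots,x_n,y,a\}$ and $\{x_1,\dots,x_n,y,b\}$. The natural approach is via divided differences: the interpolation polynomial on $\{x_1,\dots,x_n,y,a\}$ equals $p_a(x)+\big(f[x_1,\dots,x_n,y,a]\big)\prod_{i}(x-x_i)(x-a)$, where $p_a$ is the old integer polynomial on $\{x_1,\dots,x_n,a\}$, so integrality reduces to showing the divided difference $f[x_1,\dots,x_n,y,a]$ is an integer; using the recursion $f[x_1,\dots,x_n,y,a]=\dfrac{f[x_1,\dots,x_n,a]-f[x_1,\dots,x_n,y]}{a-y}$ together with $f[x_1,\dots,x_n,a]\in\mathbb Z$ and an analysis of $f[x_1,\dots,x_n,y]$ via the global formula $f(y)=p_a(y)+\alpha\prod_i(y-x_i)(y-a)$ with $\alpha=c/(b-a)$, one reduces the whole matter to the single divisibility $(a-b)\mid c(\text{something}) $ and the coprimality $\gcd(y-a,a-b)=1$ is exactly what clears the denominator. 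I expect this to be a short but slightly fiddly computation with divided differences, and once it is in hand the inductive globalization and the verification that $g$ and its two continuations are LIP are routine applications of Lemma 2 and the telescoping identity.
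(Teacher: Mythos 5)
Your part 1) is correct and is essentially the paper's argument in a slightly different parameterization: the paper writes $f_{X\sqcup\{a,b\}}=f_X+(ux+v)\prod_i(x-x_i)$ and deduces $\alpha-\beta=u(a-b)$, while you work with the single top coefficient $\alpha=c/(b-a)$; these are the same computation.

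Part 2), however, rests on a false lemma. You propose to take the LIP function on $U$ to be $f$ itself, pointwise, and your ``key finite lemma'' asserts that for $y\in V$ the interpolation polynomial of $f$ on $\{x_1,\dots,x_n,y,a\}$ is still integral. This fails already in the paper's motivating example: take $n=1$, $x_1=0$, $a=-1$, $b=1$, $f(x)=x(x+1)/2$, and $y=2\in V$ (indeed $\gcd(2-(-1),\,(-1)-1)=\gcd(3,2)=1$). The interpolation polynomial of $f$ on $\{0,2,-1\}$ is the unique quadratic through $(-1,0)$, $(0,0)$, $(2,3)$, namely $x(x+1)/2$ itself, which is not integral. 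More structurally: once one normalizes $\deg f\le n+1$ (as your own part 1) does), the interpolation polynomial of $f$ on \emph{any} set of $n+2$ points is $f$ itself, which is non-integer by hypothesis, so no choice of $y$ can make your lemma true; and in fact $f$ restricted to $\{x_1,\dots,x_n\}\cup V$ need not be LIP at all (e.g.\ $\tfrac12 x^2$ on $2\mathbb Z$ has the circuit $\{-2,0,2\}$). The theorem only requires the constructed function to agree with $f$ on $\{x_1,\dots,x_n,a,b\}$, not on $V$, and the paper's proof exploits exactly this freedom: it adjoins the points of $V$ one at a time and at each new point $x_{n+1}$ chooses the value $y_{n+1}$ freshly, as a simultaneous solution of $y_{n+1}=f_{X\sqcup\{a\}}(x_{n+1})+c\,(x_{n+1}-a)\prod_i(x_{n+1}-x_i)$ and $y_{n+1}=f_{X\sqcup\{b\}}(x_{n+1})+d\,(x_{n+1}-b)\prod_i(x_{n+1}-x_i)$ with $c,d\in\mathbb Z$. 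Subtracting reduces this to the Bezout equation $\beta-\alpha=c(x_{n+1}-a)-d(x_{n+1}-b)$, which is solvable precisely because $\gcd(x_{n+1}-a,\,x_{n+1}-b)=\gcd(x_{n+1}-a,\,a-b)=1$. That is where the coprimality defining $V$ actually enters; in your outline it is invoked in the wrong place, to ``clear a denominator'' that cannot be cleared.
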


\begin{proof} 
1) Denote by $X$ the finite set $\{x_1,\dots,x_n\}$. By the given conditions on $f$,

$$\left\{\begin{array}{l}
f_{X\sqcup \{a\}}(x)=f_X(x)+\alpha \cdot \prod \limits_{i=1}^{n}(x-x_i)\\
f_{X\sqcup \{b\}}(x)=f_X(x)+\beta \cdot \prod \limits_{i=1}^{n}(x-x_i)
\end{array} \right.$$ 
for some $\alpha \neq \beta$ in $\mathbb Z$.

Similarly, $f_{X\sqcup \{a,b\}}(x)=f_X(x)+(ux+v)\cdot \prod \limits_{i=1}^{n}(x-x_i)$, and we are given that $f_{X\sqcup \{a,b\}}(x) \notin \mathbb Z[x],$ so $u$ or $v$ is not an integer. Plugging in $a$ and $b$, we get
$\alpha=ua+v$ and $\beta=ub+v$. Thus $\alpha-\beta=u(a-b)$. If $a-b=\pm 1,$ then $u\in \mathbb Z.$ So, since $\alpha \in \mathbb Z,$ we get that $v\in \mathbb Z,$ a contradiction.

2) To prove the second statement, we will show that one can add to $X$ any element $x_{n+1}$ of $U\setminus X$, by defining $f(x_{n+1})$ appropriately. And we will repeat the argument one-by-one for all elements of $U$ that are not already in $X$. 

To define $f(x_{n+1})$ as $y_{n+1},$ we just need the following, for some integers $c$ and $d$:

$$\left\{\begin{array}{l}
y_{n+1}=f_{X\sqcup \{a\}}(x_{n+1})+c \cdot  (x_{n+1}-a) \prod \limits_{i=1}^{n}(x_{n+1}-x_i)\\
y_{n+1}=f_{X\sqcup \{b\}}(x_{n+1})+d \cdot (x_{n+1}-b) \prod \limits_{i=1}^{n}(x_{n+1}-x_i)
\end{array} \right.$$ 

Since we are free to choose any $y_{n+1},$ we just need to find $c$ and $d$ so that $$f_{X\sqcup \{a\}}(x_{n+1})+c(x_{n+1}-a) \prod \limits_{i=1}^{n}(x_{n+1}-x_i)=f_{X\sqcup \{b\}}(x_{n+1})+d (x_{n+1}-b) \prod \limits_{i=1}^{n}(x_{n+1}-x_i).$$

Subtracting the formulas in the proof of the first statement, and plugging in $x_{n+1}$ for $x$, we get $f_{X\sqcup \{a\}}(x_{n+1})-f_{X\sqcup \{b\}}(x_{n+1})=(\alpha-\beta)\prod \limits_{i=1}^{n}(x_{n+1}-x_i).$ So, dividing by $\prod \limits_{i=1}^{n}(x_{n+1}-x_i),$ we just need to solve the equation $\beta-\alpha=c(x_{n+1}-a)-d(x_{n+1}-b),$ for given $\alpha$ and $\beta$ and unknown $c$ and $d.$  Because $x_{n+1}\in V,$ $gcd(x_{n+1}-a, x_{n+1}-b)=1,$ so such $c$ and $d$ always exist.
\end{proof}

\begin{Remark} The above calculations also show that one cannot add to $X$ any element of the doubly-infinite arithmetic progression $a+(b-a)\mathbb Z$. Indeed, for such $x$ we have $gcd(x-a,x-b)=(a-b)$, which would imply that $\alpha-\beta$ is a multiple of $a-b$, which is impossible by the argument at the end of the proof of the first statement of the theorem.
\end{Remark}

\begin{Example} Applying the above theorem to Example 2, we get an example of a non-LIP function on $V=2\mathbb Z \cup \{-1,1\}$ which is LIP on $V\setminus \{-1\}$ and on  $V\setminus \{1\}$.
\end{Example}

\begin{Remark}The above example may be interpreted as evidence that the set $V=2\mathbb Z \cup \{-1,1\}$ is in some sense ``not simply-connected''. An analogous fact in the theory of complex-analytic functions is that $\sqrt{z}$ can be analytically continued from the domain $Re(z)>0$ to the complements in $\mathbb C$ of $i\mathbb R_{>0}$ and of $-i\mathbb R_{>0}$, but not to their union, $\mathbb C\setminus \{0\}$.
\end{Remark}

We also want to consider general integer-valued functions on subsets of $\mathbb Z$ that may or may not be LIP. Clearly, if a function is LIP on some set, it is LIP on all of its subsets. The following definition describes the ``minimal obstructions'' to the LIP condition. The terminology is inspired by the  theory of matroids. 

\begin{Definition} Suppose $f\colon U\to \mathbb Z$ is a function {\bf not} in $LIP(U)$. A finite subset $X$ of $U$ is called a {\bf circuit} for $f$ if $f_X\notin \mathbb Z[x] $ but $f_{X\setminus \{a\}}\in \mathbb Z[x]$ for every $a\in X$. 
\end{Definition}

Before analyzing the structure of circuits, it is convenient to prove a general lemma about interpolating polynomials.

\begin{Lemma} (Exchange Formula) Suppose $X\sqcup \{a,b\}$ is a finite subset of $\mathbb Z$, and $f$ is a function on it. Then
$$f_{X\sqcup \{a\}}(x)=f_{X\sqcup \{b\}}(x)+c\cdot (a-b)\cdot \prod \limits_{z\in X} (x-z).$$
where $c$ is the coefficient for $x^{|X|+1}$ in $f_{X\sqcup \{a,b\}}(x)$.
\end{Lemma}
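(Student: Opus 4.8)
The plan is to derive the formula by comparing each of $f_{X\sqcup\{a\}}$ and $f_{X\sqcup\{b\}}$ with the single larger interpolating polynomial $f_{X\sqcup\{a,b\}}$, and then subtracting. Set $n=|X|$, so that $f_{X\sqcup\{a\}}$ and $f_{X\sqcup\{b\}}$ have degree at most $n$, while $f_{X\sqcup\{a,b\}}$ has degree at most $n+1$, and let $c$ be the coefficient of $x^{n+1}$ in $f_{X\sqcup\{a,b\}}(x)$.

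First I would note that $f_{X\sqcup\{a,b\}}(x)-f_{X\sqcup\{a\}}(x)$ has degree at most $n+1$ and vanishes at each of the $n+1$ points of $X\sqcup\{a\}$, since both polynomials agree with $f$ there. Hence it is a scalar multiple of $(x-a)\prod_{z\in X}(x-z)$. Comparing the coefficients of $x^{n+1}$ on both sides — which is $c$ for $f_{X\sqcup\{a,b\}}(x)$ and $0$ for $f_{X\sqcup\{a\}}(x)$ — identifies the scalar as $c$, so $f_{X\sqcup\{a,b\}}(x)=f_{X\sqcup\{a\}}(x)+c\,(x-a)\prod_{z\in X}(x-z)$. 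Running the same argument with $b$ in place of $a$ gives $f_{X\sqcup\{a,b\}}(x)=f_{X\sqcup\{b\}}(x)+c\,(x-b)\prod_{z\in X}(x-z)$.

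Finally I would subtract these two identities: the left-hand sides cancel, leaving $0=f_{X\sqcup\{a\}}(x)-f_{X\sqcup\{b\}}(x)+c\bigl[(x-a)-(x-b)\bigr]\prod_{z\in X}(x-z)$, and since $(x-a)-(x-b)=b-a$ this rearranges to exactly $f_{X\sqcup\{a\}}(x)=f_{X\sqcup\{b\}}(x)+c\,(a-b)\prod_{z\in X}(x-z)$. I do not anticipate a genuine obstacle here; the only point needing a moment's care is the uniqueness of the interpolating polynomial of degree at most $n+1$ through $n+2$ prescribed points, which is what makes $f_{X\sqcup\{a,b\}}$ well-defined and lets us pin down the scalar via the top coefficient rather than an extra evaluation — but this is precisely the classical interpolation fact recalled at the start of the paper. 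A direct computation in Newton (divided-difference) form would also prove the identity, but the subtraction argument is shorter and avoids any bookkeeping.
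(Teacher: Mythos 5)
Your argument is correct and is essentially the paper's own proof: both express $f_{X\sqcup\{a,b\}}$ as $f_{X\sqcup\{a\}}$ plus $c\,(x-a)\prod_{z\in X}(x-z)$ and likewise with $b$, then subtract. You merely spell out the justification (degree count, vanishing at $n+1$ points, matching the top coefficient) that the paper dismisses with ``clearly.''
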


\begin{proof} Clearly, $f_{X\sqcup \{a,b\}}(x)=f_{X\sqcup \{b\}}(x)+c\cdot (x-b) \prod \limits_{z\in X} (x-z)$, where $c$ is as in the statement of the lemma. Similarly,   $f_{X\sqcup \{a,b\}}(x)=f_{X\sqcup \{a\}}(x)+c\cdot (x-a) \prod \limits_{z\in X} (x-z)$. Subtracting one of these identities from the other proves the required formula.
\end{proof}

\begin{Lemma} Suppose $X$ is a circuit for $f$. Then

1)  $f_X(x)=cx^{|X|-1}+\ldots$ with $c\notin \mathbb Z$.

2) Suppose $d$ is the denominator of $c$ (in reduced form). Then all elements of $X$ are congruent modulo $d$.
\end{Lemma}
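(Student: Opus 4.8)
The plan is to extract the leading coefficient of $f_X$ by repeated use of the Exchange Formula (Lemma 8), which is the natural tool for moving between interpolation polynomials on a circuit. For part 1), suppose $X$ is a circuit with $|X| = n+1$, and fix any $a \in X$; write $X = X' \sqcup \{a\}$ with $|X'| = n$. Since $X$ is a circuit, $f_{X'} \in \mathbb Z[x]$, and $f_X(x) = f_{X'}(x) + \gamma_a \cdot \prod_{z \in X'}(x - z)$ for some $\gamma_a \in \mathbb Q$. As $f_X \notin \mathbb Z[x]$ but $f_{X'} \in \mathbb Z[x]$, we must have $\gamma_a \notin \mathbb Z$; in particular $\deg f_X = n = |X| - 1$ (it cannot be smaller, else $f_X = f_{X'} \in \mathbb Z[x]$), and the leading coefficient of $f_X$ is exactly $c = \gamma_a$. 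This also shows $\gamma_a$ does not in fact depend on the choice of $a$, since it always equals $c$, the coefficient of $x^{|X|-1}$ in $f_X$.

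For part 2), the key point is to compare the two expressions $f_X(x) = f_{X \setminus \{a\}}(x) + c \prod_{z \in X \setminus \{a\}}(x-z)$ and $f_X(x) = f_{X \setminus \{b\}}(x) + c \prod_{z \in X \setminus \{b\}}(x-z)$ for two distinct elements $a, b \in X$, using that both $f_{X \setminus \{a\}}$ and $f_{X \setminus \{b\}}$ lie in $\mathbb Z[x]$. Writing $X'' = X \setminus \{a,b\}$, subtracting gives
$$f_{X\setminus\{b\}}(x) - f_{X\setminus\{a\}}(x) = c\left(\prod_{z \in X\setminus\{a\}}(x-z) - \prod_{z \in X\setminus\{b\}}(x-z)\right) = c\,(a - b)\prod_{z \in X''}(x-z),$$
where the last equality is exactly the Exchange Formula applied to the set $X'' \sqcup \{a, b\} = X$ (with the relevant leading coefficient being $c$). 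The left-hand side is a polynomial in $\mathbb Z[x]$, so the polynomial $c\,(a-b)\prod_{z \in X''}(x-z)$ has integer coefficients. Since $\prod_{z \in X''}(x-z)$ is monic with integer coefficients, clearing it forces $c(a-b) \in \mathbb Z$, i.e. $d \mid (a-b)$, i.e. $a \equiv b \pmod d$. As $a, b$ were arbitrary distinct elements of $X$, all elements of $X$ are congruent modulo $d$.

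The one place that needs a little care — and the main obstacle — is justifying that the product $c(a-b)\prod_{z\in X''}(x-z)$ having integer coefficients really forces $c(a-b)\in\mathbb Z$; this is precisely the Gauss-lemma-type observation used already in Lemma 2 of the excerpt: if $q(x)\in\mathbb Q[x]$ is monic with integer coefficients and $r\cdot q(x)\in\mathbb Z[x]$ for $r\in\mathbb Q$, then looking at the top coefficient gives $r\in\mathbb Z$. Everything else is bookkeeping with the Exchange Formula, and no genuinely new idea beyond Lemma 8 is required.
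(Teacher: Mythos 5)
Your proof is correct and follows essentially the same route as the paper: part 1) is the same decomposition $f_X = f_{X\setminus\{a\}} + c\prod_{z\in X\setminus\{a\}}(x-z)$ with $c\notin\mathbb Z$ forced by $f_{X\setminus\{a\}}\in\mathbb Z[x]$ and $f_X\notin\mathbb Z[x]$, and part 2) is exactly the Exchange Formula applied to $X\setminus\{a,b\}$, $a$, $b$ (you merely re-derive that formula by direct subtraction rather than citing it). The final step, reading off $c(a-b)$ as the top coefficient of an integer polynomial, is handled the same way in the paper.
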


\begin{proof}  1) Take any $a\in X$. Then $f_X(x)=f_{X\setminus \{a\}}(x)+c\ \cdot \!\!\!\prod \limits_{z\in X\setminus \{a\}} (x-z)$ for some constant $c$, which is the coefficient of $x^{|X|-1}$. Since $f_{X\setminus \{a\}}\in \mathbb Z[x]$ but $f_X \notin \mathbb Z[x],$ $c \notin \mathbb Z$. 

2) For every $a$ and $b$ in $X$ by the Exchange Formula (Lemma 6) for $X\setminus\{a,b\}$, $a$, and $b$,  we have 
$$f_{X\setminus \{b\}}(x)=f_{X\setminus \{a\}}(x)+c\cdot (a-b)\cdot \prod \limits_{z\in X\setminus\{a,b\}} (x-z)$$

Since $f_{X\setminus \{b\}}$ and $f_{X\setminus \{a\}}$ are in $\mathbb Z[x],$ we have $c(a-b)\in \mathbb Z$, so $d\ |\ (a-b).$
\end{proof}

\begin{Remark} All functions described in Example 3 have only one circuit: $X=\{-1,1\}$. At the other extreme, a non-LIP function can have infinitely many circuits. We do not know if there is a function on $\mathbb Z$ with exactly $k\geq 2$ circuits.
\end{Remark}

\begin{Remark} All circuits of size $2$ are of the form $X=\{a,b\},$ where $a-b$ does not divide $f(a)-f(b)$. Larger circuits also exist, for example $f(x)=\frac{1}{2}x^2$ on $U=2\mathbb Z$ has $X=\{-2,0,2\}$ as a circuit. 
\end{Remark}

\section{Sheaf-theoretic properties of LIP Functions}

One of the fundamental properties of complex-analytic functions is that they form a sheaf with respect to the usual topology on $\mathbb C.$ We want to investigate the analogs of this phenomenon for LIP functions.

The most common topology on $\mathbb Z$ is the discrete topology: all sets are open and closed. But there are many natural topologies on $\mathbb Z$ (and its subsets) that have no finite non-empty open sets. This will be our assumption for the remainder of the paper.

\begin{Definition}
Suppose $S$ is an infinite subset of $\mathbb Z$, and $\tau$ is a topology on $S$, which we view as the collection of all open subsets of $S.$ Suppose that all non-empty $U\in \tau$ are infinite. Then the collection of rings $LIP(U)$ for all nonempty $U\in \tau$  together with the restriction maps is called the LIP presheaf on $(S,\tau)$. 
\end{Definition}

\begin{Remark} The formal definition of a presheaf, appropriate for our situation, is a contravariant functor from the category of nonempty $\tau$-open subsets of $S$ with respect to inclusion to the category of commutative rings. We will forgo such formalities.
\end{Remark}

\begin{Remark} The notions of presheaf and sheaf have played a pivotal role in the reformulation of Algebraic Geometry via the language of schemes by Grothendieck and his collaborators, and have been generalized considerably. For a relatively elementary introduction see, for example, \cite{GelfandManin}, section I.5.
\end{Remark}

Clearly, LIP presheaf on $(S,\tau)$ is a subpresheaf of the sheaf of all integer-valued functions on $\tau$-open nonempty subsets of $S.$ To every presheaf one can canonically associate a sheaf, by the construction called sheafification. In our situation it can be given by the following definition.

\begin{Definition}
For $(S,\tau)$ as above, the LIP sheaf of $(S,\tau)$, to be denoted by $\mathcal {LIP}(S,\tau)$ is defined as follows. For every nonempty $U\in \tau$ the ring $\mathcal {LIP}(S,\tau) (U)$ consists of all $f\colon U\to \mathbb Z$ such that for every $a\in U$ there exists an open  $U_a\subseteq U$ containing $a$, such  $f_{|_{U_a}}\in LIP(U_a).$ The restriction maps are the usual restriction maps. We will call elements of $\mathcal {LIP}(S,\tau) (U)$ locally LIP functions on $U$.
\end{Definition}

From the above definition, it is clear that $\mathcal {LIP}(S,\tau) (U)\supseteq LIP(U)$. In general, this inclusion is not an equality. In fact, if $U$ is not connected, then $\mathcal {LIP}(S,\tau) (U)$ is the product of  $\mathcal {LIP}(S,\tau) (U_i)$ where $U_i$ are the connected components of $U$. But if $U$ is connected, one can hope that the above inclusion is an equality. The main goal of this section is to prove this for several natural topologies. Our principal result is the following general theorem.

\begin{Theorem} Suppose $f\colon U\to \mathbb Z$, and $\mathbb Z\supseteq U=\bigcup_{\alpha} U_{\alpha}$, such that for every $\alpha_1$ and $\alpha_2$ and any distinct elements $a_1\in U_{\alpha_1}$ and $a_2\in U_{\alpha_2}$ the intersection $ U_{\alpha_1}\cap  U_{\alpha_2}$ contains infinitely many terms of the arithmetic progression $a_1+(a_2-a_1)\mathbb Z$. Then $f\in LIP(U)$ if and only if its restriction to each $U_{\alpha}$ is LIP.
\end{Theorem}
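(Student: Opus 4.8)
The plan is to prove the nontrivial implication — that $f\in LIP(U)$ once every $f|_{U_\alpha}$ is LIP — by contradiction, the two technical ingredients being the Exchange Formula (Lemma 6) and the structure of circuits (Lemma 7). The reverse implication is immediate: on a finite $X\subseteq U_\alpha$ the interpolation polynomial of degree less than $|X|$ is unique, hence the same whether computed in $U$ or in $U_\alpha$, and it lies in $\mathbb Z[x]$ as soon as $f\in LIP(U)$. Since any finite $X\subseteq U$ meets only finitely many $U_\alpha$, it suffices to treat a finite index set, and this I would handle by induction on the number $m$ of sets in the cover. The case $m=1$ is immediate from the hypothesis; for the step, writing $U=V\cup U_m$ with $V=U_1\cup\cdots\cup U_{m-1}$, the hypothesis restricts to the cover $\{U_1,\dots,U_{m-1}\}$ of $V$, so $f|_V\in LIP(V)$ by induction, while the intersection condition for the pair $\{V,U_m\}$ is inherited from the hypothesis (if $a_1\in V\setminus U_m$ then $a_1\in U_i$ for some $i<m$ and $U_i\cap U_m\subseteq V\cap U_m$). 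So everything reduces to the case of two sets.

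So let $W=W_1\cup W_2$ with $W_1,W_2$ infinite, $f|_{W_1},f|_{W_2}$ LIP, and $W_1\cap W_2$ meeting every progression $a_1+(a_2-a_1)\mathbb Z$ (for $a_1\in W_1\setminus W_2$, $a_2\in W_2\setminus W_1$) in infinitely many points; suppose for contradiction $f\notin LIP(W)$. Then $f$ has a circuit (a minimal finite set with non-integral interpolation polynomial), and I would choose one, $Z$, minimizing $u(Z):=|Z\cap(W_1\setminus W_2)|+|Z\cap(W_2\setminus W_1)|$. No circuit can lie inside $W_1$ or inside $W_2$ (that would contradict LIP-ness of the relevant restriction), so $Z$ has a point $a_1\in W_1\setminus W_2$ and a point $a_2\in W_2\setminus W_1$, and $u(Z)\ge 2$. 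Using the hypothesis, pick $t\in W_1\cap W_2$ with $t\notin Z$ and $t=a_1+j(a_2-a_1)$ for some integer $j\ne 0,1$, and set $Z^{*}=(Z\setminus\{a_2\})\cup\{t\}$ and $Z^{\dagger}=(Z\setminus\{a_1\})\cup\{t\}$. Since $t\in W_1\cap W_2$, both satisfy $u(Z^{*})=u(Z^{\dagger})=u(Z)-1$.

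The heart of the matter is that $f_{Z^{*}}$ and $f_{Z^{\dagger}}$ cannot both lie in $\mathbb Z[x]$. Let $c$ be the leading coefficient of $f_Z$, which is non-integral because $Z$ is a circuit (Lemma 7(1)), let $\gamma$ be the leading coefficient of $f_{Z\cup\{t\}}$, and let $\ell^{*},\ell^{\dagger}$ be the leading coefficients of $f_{Z^{*}},f_{Z^{\dagger}}$. Applying the Exchange Formula to $Z\setminus\{a_2\}$ with the pair $\{a_2,t\}$, and to $Z\setminus\{a_1\}$ with the pair $\{a_1,t\}$, and comparing coefficients of $x^{|Z|-1}$, gives
$$c=\ell^{*}+(a_2-t)\gamma,\qquad c=\ell^{\dagger}+(a_1-t)\gamma.$$
Multiplying the first equation by $a_1-t$, the second by $a_2-t$, and subtracting eliminates $\gamma$; substituting $a_1-t=-j(a_2-a_1)$, $a_2-t=(1-j)(a_2-a_1)$ and dividing by $a_2-a_1\ne 0$ yields
$$c=j\,\ell^{*}+(1-j)\,\ell^{\dagger}.$$
If both $f_{Z^{*}}$ and $f_{Z^{\dagger}}$ were in $\mathbb Z[x]$, then $\ell^{*},\ell^{\dagger}\in\mathbb Z$ and the right-hand side would be an integer, contradicting $c\notin\mathbb Z$. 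Hence at least one of $Z^{*},Z^{\dagger}$ has non-integral interpolation polynomial and therefore contains a circuit, whose $u$-value is at most $u(Z)-1$; this contradicts the minimality of $u(Z)$, proving the two-set case and hence the theorem.

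The step I expect to be the main obstacle is precisely this ``not both'' claim. The naive approach — replace a misplaced point of a circuit by a point of $W_1\cap W_2$ and again obtain a circuit — fails, since a single such replacement need not preserve non-integrality of the interpolation polynomial. What rescues the argument is to consider replacing \emph{either} endpoint $a_1$ or $a_2$ by the \emph{same} auxiliary point $t$ and to exploit the divided-difference relation packaged in the Exchange Formula, which forces at least one of the two replacements to leave the polynomial non-integral. The remaining care goes into choosing the invariant $u(Z)$ that strictly decreases and into checking that the progression hypothesis is exactly what makes this work — indeed Example 4 shows it cannot be weakened, since there $W_1\cap W_2=2\mathbb Z$ misses the relevant progression of odd integers.
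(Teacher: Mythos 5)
Your proof is correct and follows essentially the same route as the paper's: reduce to a finite cover and then to two sets, take a circuit minimizing the number of elements outside the intersection, and use the Exchange Formula at a point $t$ of the intersection lying on the progression through $a_1$ and $a_2$ to force the non-integral leading coefficient into the integer combination $c=j\ell^{*}+(1-j)\ell^{\dagger}$ (which is exactly the paper's relation $0=c(a-b)+s(w-a)-t(w-b)$ specialized to $w$ in the progression). One phrasing slip: a finite $X$ need not meet only finitely many $U_\alpha$ --- you only need finitely many of them to cover $X$ --- but this does not affect the argument.
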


\begin{proof} The implication in one direction is obvious. So we suppose that all restrictions of $f$ to $U_{\alpha}$ are LIP, and we will prove that $f\in LIP(U)$ by means of contradiction.

Suppose $f$ is not LIP on $U$. Then it has some circuit $X$. For each $x\in X$ pick $U_{\alpha}$ that contains it. Replacing $U$ by the finite union of these $U_{\alpha}$, we can essentially reduce the problem to the case when the family $\{U_{\alpha}\}$ is finite: $\{U_1, U_2, \dots, U_N\}$. Suppose $k\leq N-1$ is the largest number such that the restriction of $f$ to $\bigcup_{i=1}^{k}U_i$ is LIP. Then the restriction of $f$ to $\bigcup_{i=1}^{k+1}U_i= (\bigcup_{i=1}^{k}U_i) \cup U_{k+1}$ is not LIP. So we have essentially reduced the problem to the case of a 2-set family: $U=V_1\cup V_2$, where $V_1=\bigcup_{i=1}^{k}U_i$ and $V_2=U_{k+1}$ (note that the intersection property is preserved).

Consider a circuit for $f$ on this $U=V_1\cup V_2$, that has the smallest possible number of elements not in $W=V_1\cap V_2$. Note that $X$ must have some element $a$ in $V_1\setminus V_2$ and some element $b$ in $V_2\setminus V_1$.   The minimality condition implies that if $a$ or $b$ is exchanged for any element  of $W\setminus X,$ the resulting set is not a circuit for $f$. Suppose $X=\{x_1,\dots , x_n,a,b\}$ and $w\in W\setminus X$. Then for some integers $s$ and $t$ we have

$$\left\{ \begin{array}{l}
f(w)= f_{X\setminus \{b\}}(w)+s\cdot (w-a) \prod \limits_{i=1}^n(w-x_i)\\
f(w)= f_{X\setminus \{a\}}(w)+t\cdot (w-b) \prod \limits_{i=1}^n(w-x_i)
\end{array}\right.$$

Subtracting and applying the Exchange Formula (Lemma 6), we get
$$0=c(a-b)\prod \limits_{i=1}^n(w-x_i)+\prod \limits_{i=1}^n(w-x_i) (s(w-a)-t(w-b)),$$
where $c\notin \mathbb Z$ is the highest degree coefficient of $f_X(x)$. Dividing by  $\prod \limits_{i=1}^n(w-x_i)$, we get $ 0=c(a-b)+s(w-a)-t(w-b).$ Because the intersection of $W$ and the arithmetic progression $a+(b-a)\mathbb Z$ is infinite, we can take $w$ in this progression to get a contradiction with $c\notin \mathbb Z$.
\end{proof}

The above theorem has applications to many topologies, including the following.

\begin{Definition} For $S\subseteq \mathbb Z$ the cofinite topology on $S$ is the topology where the non-empty open sets are the complements of finite sets.  
\end{Definition}

\begin{Definition} The density one topology on $\mathbb N$ is the topology where  the non-empty open sets are the sets of density one: $\lim \limits_{x\to +\infty} \frac{|\{n\in U  |  n\leq x\}|}{|\{n\in \mathbb N |  n\leq x\}|}=1.$
\end{Definition}

\begin{Definition} The density one topology on $\mathbb Z$ is the topology where  the non-empty open sets are the sets of density one: $\lim \limits_{x\to +\infty} \frac{|\{n\in U  |  |n|\leq x\}|}{|\{n\in \mathbb Z |  |n|\leq x\}|}=1.$
\end{Definition}

\begin{Corollary} For the cofinite and density one topologies on $\mathbb N$ and $\mathbb Z$ for all non-empty open sets $U$ we have the equality $\mathcal {LIP}(S,\tau) (U)= LIP(U)$. This means that the rings of  LIP functions form a sheaf in these topologies.
\end{Corollary}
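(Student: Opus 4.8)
The plan is to derive the Corollary directly from Theorem~9. Fix $S\in\{\mathbb N,\mathbb Z\}$, fix $\tau$ to be either the cofinite or the density one topology on $S$, and let $U$ be a nonempty $\tau$-open set. The inclusion $LIP(U)\subseteq\mathcal{LIP}(S,\tau)(U)$ is immediate (take $U_a=U$ for every $a$), so the content is the reverse inclusion. Let $f\in\mathcal{LIP}(S,\tau)(U)$; by the definition of the LIP sheaf there is a family of nonempty $\tau$-open sets $\{U_a\}_{a\in U}$ with $a\in U_a\subseteq U$ and $f|_{U_a}\in LIP(U_a)$, and hence $U=\bigcup_{a\in U}U_a$. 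If this covering $\{U_a\}$ satisfies the hypothesis of Theorem~9, then that theorem yields $f\in LIP(U)$; since $U$ is an arbitrary nonempty open set, this says exactly that the LIP presheaf coincides with its sheafification, i.e.\ is a sheaf, which is the assertion of the Corollary. So everything reduces to verifying the intersection hypothesis of Theorem~9 for the cover $\{U_a\}$.

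That hypothesis asks that for all $a,b\in U$ and all distinct $c_1\in U_a$, $c_2\in U_b$, the set $U_a\cap U_b$ contain infinitely many terms of the arithmetic progression $P=c_1+(c_2-c_1)\mathbb Z$. I would deduce this from two properties shared by both topologies in play: (i) the intersection of two nonempty $\tau$-open sets is again a nonempty $\tau$-open set; and (ii) every nonempty $\tau$-open set $W$ contains infinitely many terms of every $P\subseteq\mathbb Z$ that is an arithmetic progression with nonzero common difference. Indeed, applying (i) to $W=U_a\cap U_b$ and then (ii) to this $W$ and to $P=c_1+(c_2-c_1)\mathbb Z$ — whose common difference $c_2-c_1$ is nonzero because $c_1\neq c_2$ — gives precisely what is needed.

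It remains to check (i) and (ii). Property (i) is routine: in the cofinite topology the complement of a nonempty open set is finite and a union of two finite sets is finite, while in the density one topology the complement of a nonempty open set has density zero and a union of two density-zero sets has density zero. Property (ii) in the cofinite case is equally routine, since such a $W$ omits only finitely many integers while $P\cap S$ is infinite. The one place that needs an actual estimate — and the step I would flag as the crux — is (ii) for the density one topology: writing $m\neq 0$ for the common difference of $P$, in the normalization defining the density one topology the set $P\cap S$ has density $1/|m|>0$ while $S\setminus W$ has density $0$; since $(P\cap S)\setminus W\subseteq S\setminus W$, subtracting counting functions up to $x$ shows that the number of terms of $P$ lying in $W$ with $|n|\le x$ (resp.\ $n\le x$) is at least $c\,x-o(x)$ for a positive constant $c$, hence tends to infinity, so $W\cap P$ is infinite. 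With (i) and (ii) in hand the hypothesis of Theorem~9 holds, giving $f\in LIP(U)$ and proving the Corollary; the argument is conceptually easy, and its only non-formal ingredient is this density estimate.
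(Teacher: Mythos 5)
Your proposal is correct and follows the same route as the paper: reduce to Theorem~9 via the cover coming from the definition of the sheafification, and verify the intersection hypothesis by observing that the intersection of two nonempty open sets still has full density (complement finite, resp.\ density zero) and hence meets every infinite arithmetic progression infinitely often. The paper compresses this into one sentence, while you spell out the density estimate; the substance is identical.
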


\begin{proof} Intersection of any two non-empty open sets in any of the above topologies has density one, so it must intersect infinitely with every infinite arithmetic progression.
\end{proof}

\begin{Corollary} Suppose $P$ is the set of all primes in $\mathbb N$, with the  cofinite topology. Then for all non-empty open sets $U$ we have the equality $\mathcal {LIP}(S,\tau) (U)= LIP(U)$. This means that the rings of LIP functions form a sheaf in this topology.
\end{Corollary}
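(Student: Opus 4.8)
The plan is to deduce this from Theorem 9 in exactly the same way Corollary 4 was obtained for the cofinite and density-one topologies; the one new ingredient needed is Dirichlet's theorem on primes in arithmetic progressions.

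First I would unwind the definitions. Fix a non-empty $\tau$-open set $U \subseteq P$ and a function $f \in \mathcal{LIP}(P,\tau)(U)$. By the definition of local LIP-ness, for each $a \in U$ there is a $\tau$-open set $U_a$ with $a \in U_a \subseteq U$ and $f|_{U_a} \in LIP(U_a)$; then $U = \bigcup_{a \in U} U_a$ is an open cover and $f$ restricts to a LIP function on each $U_a$. To invoke Theorem 9 with this cover, I must verify its intersection hypothesis: for any two indices and any distinct $p \in U_a$, $q \in U_{a'}$, the set $U_a \cap U_{a'}$ should contain infinitely many terms of the progression $p + (q - p)\mathbb{Z}$.

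This is where primality enters. Every element of every $U_a$ lies in $P$, so $p$ and $q$ are distinct primes, and therefore $\gcd(p, q - p) = \gcd(p, q) = 1$. By Dirichlet's theorem the progression $p + (q - p)\mathbb{Z}$, i.e. the residue class of $p$ modulo $|q - p|$, contains infinitely many primes. On the other hand each non-empty $\tau$-open set is cofinite in $P$, so $U_a \cap U_{a'}$ is cofinite in $P$ and omits only finitely many primes; hence it contains all but finitely many of those primes, and in particular infinitely many terms of $p + (q - p)\mathbb{Z}$. This is precisely the hypothesis of Theorem 9.

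Theorem 9 then yields $f \in LIP(U)$, and combined with the obvious inclusion $LIP(U) \subseteq \mathcal{LIP}(P,\tau)(U)$ this gives the equality, hence the sheaf property. (Note that the cofinite topology on $P$ is hyperconnected, so every non-empty open set is connected, consistent with the general heuristic that equality should hold on connected opens.) There is no real obstacle here: the only non-elementary tool is Dirichlet's theorem, and the sole arithmetic point is the coprimality of distinct primes, which is exactly what makes Dirichlet applicable to every progression that arises.
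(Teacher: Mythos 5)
Your proof is correct and follows essentially the same route as the paper: apply Theorem 9 to the cover coming from the definition of local LIP-ness, and verify the intersection hypothesis by observing that a progression through two distinct primes has coprime first term and common difference, so Dirichlet gives infinitely many primes in it, all but finitely many of which lie in the (cofinite) intersection. The paper's proof is just a terser statement of exactly this argument.
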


\begin{proof} The proof is a combination of Theorem 9 and Dirichlet's Theorem on primes in arithmetic progressions. Specifically, every arithmetic progression that contains two primes must contain infinitely many primes.
\end{proof}

\begin{Remark} The above example can be identified with the standard Zariski topology on the set of the maximal ideals of $\mathbb Z$. Instead of the cofinite topology we can also consider (natural or Dirichlet) density one topology, with essentially the same proof, but using a stronger version of Dirichlet's Theorem.
\end{Remark}

All of the above topologies are not Hausdorff, moreover all non-empty open sets are dense. A much more interesting example is given by Kirch topology on $\mathbb N$. We start with recalling its definition and some of its basic properties.

\begin{Definition} Kirch topology on $\mathbb N$ is the topology with the basis $\{a+d\mathbb Z_{\geq 0} \}$, where $a$ and $d$ are natural numbers, $gcd(a,d)=1,$ and $d$ is square-free.
\end{Definition}

Kirch topology makes $\mathbb N$ into a Hausdorff, connected, locally connected space. In fact, every open set from the above basis is connected (cf. \cite{Kirch}, Theorem 5, also \cite{Szczuka}, Theorem 3.5). From Dirichlet's Theorem on primes in arithmetic progression we get that the set of all primes is dense in $\mathbb N$ in Kirch topology.

For the reader's convenience, we state and prove the following easy lemma.

\begin{Lemma} Suppose $a_1,$ $a_2,$ $d_1,$ and $d_2$ are natural numbers. Then the arithmetic progressions $a_1+d_1\mathbb Z_{\geq 0}$ and  $a_2+d_2\mathbb Z_{\geq 0}$ intersect if and only if $gcd(d_1,d_2)$ divides $a_1-a_2.$ And when they do intersect, the intersection is of the form $a_3+d_3\mathbb Z_{\geq 0},$ where $d_3=lcm(d_1,d_2).$
\end{Lemma}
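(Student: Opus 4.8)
The plan is to prove this as a standard fact about intersecting arithmetic progressions, essentially the Chinese Remainder Theorem together with a divisibility obstruction. I would separate the two claims: first the existence criterion for a common element, then the description of the intersection as a single progression with modulus $\mathrm{lcm}(d_1,d_2)$.

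First I would handle the ``only if'' direction of the intersection criterion: if $n = a_1 + d_1 k_1 = a_2 + d_2 k_2$ for some $k_1, k_2 \in \mathbb{Z}_{\geq 0}$, then $a_1 - a_2 = d_2 k_2 - d_1 k_1$, which is a $\mathbb{Z}$-linear combination of $d_1$ and $d_2$ and hence divisible by $g := \gcd(d_1,d_2)$. For the ``if'' direction, suppose $g \mid a_1 - a_2$. I would first find an integer solution: by B\'ezout there exist integers $u, v$ with $d_1 u + d_2 v = g$, and scaling by $(a_2-a_1)/g$ gives integers $k_1, k_2$ with $a_1 + d_1 k_1 = a_2 + d_2 k_2 =: n_0 \in \mathbb{Z}$. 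The mild subtlety here is that the progressions live in $\mathbb{Z}_{\geq 0}$, not $\mathbb{Z}$, so $n_0$ might be negative; but once we know the solution set over $\mathbb{Z}$ is a full arithmetic progression $n_0 + \mathrm{lcm}(d_1,d_2)\mathbb{Z}$ (see next step), we can add a large enough multiple of $\mathrm{lcm}(d_1,d_2)$ to land in $\mathbb{Z}_{\geq 0}$, which proves nonempty intersection in $\mathbb{N}$.

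Next I would pin down the shape of the intersection. Working first over $\mathbb{Z}$: an integer $n$ lies in both progressions iff $n \equiv a_1 \pmod{d_1}$ and $n \equiv a_2 \pmod{d_2}$; given one solution $n_0$, any other solution $n$ satisfies $d_1 \mid n - n_0$ and $d_2 \mid n - n_0$, i.e. $\mathrm{lcm}(d_1,d_2) \mid n - n_0$, and conversely every such $n$ works. So the $\mathbb{Z}$-solution set is exactly $n_0 + d_3\mathbb{Z}$ with $d_3 = \mathrm{lcm}(d_1,d_2)$. Intersecting with $\mathbb{Z}_{\geq 0}$: this set, if nonempty (which we just established), has a least element $a_3 \geq 0$, and since it is closed under adding $d_3$ and under subtracting $d_3$ whenever the result stays $\geq a_1, a_2$, one checks the nonnegative part is precisely $a_3 + d_3\mathbb{Z}_{\geq 0}$ — here one should note $a_3 \geq \max(a_1, a_2)$ is not needed; rather, any element of $(n_0 + d_3\mathbb{Z}) \cap \mathbb{Z}_{\geq 0}$ automatically satisfies the original progression constraints $n \geq a_1$, $n \geq a_2$ because it already lies in $a_i + d_i\mathbb{Z}_{\geq 0}$ on each side, so no extra care is required beyond taking $a_3$ to be the minimum.

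I do not expect any serious obstacle: the only thing to be careful about is the distinction between congruence conditions over $\mathbb{Z}$ and the one-sided progressions $\mathbb{Z}_{\geq 0}$ used in the Kirch basis, which is why I would consistently reduce to the $\mathbb{Z}$ statement first and only at the end intersect with $\mathbb{Z}_{\geq 0}$ and take the minimal element. Note that the square-free hypothesis on $d_1, d_2$ from the Kirch basis is irrelevant to this lemma and need not be used (indeed $d_3 = \mathrm{lcm}(d_1,d_2)$ is again square-free when $d_1,d_2$ are, which is presumably why the statement is phrased this way, but the proof does not require it).
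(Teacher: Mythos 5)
Your overall strategy (B\'ezout for existence, lcm for the shape of the intersection) is the same as the paper's, but there is a concrete error in the last step. You assert that any element of $(n_0+d_3\mathbb Z)\cap\mathbb Z_{\geq 0}$ ``automatically satisfies the original progression constraints $n\geq a_1$, $n\geq a_2$,'' and accordingly you take $a_3$ to be the least \emph{nonnegative} element of the congruence class $n_0+d_3\mathbb Z$. That claim is false: membership in $a_i+d_i\mathbb Z_{\geq 0}$ means $n\equiv a_i \pmod{d_i}$ \emph{and} $n\geq a_i$, not merely $n\geq 0$. For instance, with $a_1=5$, $d_1=2$, $a_2=3$, $d_2=2$ the two progressions are $\{5,7,9,\dots\}$ and $\{3,5,7,\dots\}$, whose intersection is $5+2\mathbb Z_{\geq 0}$, while your set $(n_0+2\mathbb Z)\cap\mathbb Z_{\geq 0}=\{1,3,5,\dots\}$ contains $1$ and $3$, which lie in neither one-sided progression. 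The same slip appears in your nonemptiness argument, where ``land in $\mathbb Z_{\geq 0}$'' should be ``land in $\mathbb Z_{\geq \max(a_1,a_2)}$'' (still achievable by adding a large enough multiple of $d_3$, so nonemptiness survives).

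The fix is short and is exactly what the paper does: define $a_3$ to be the least element of the \emph{actual} intersection $(a_1+d_1\mathbb Z_{\geq 0})\cap(a_2+d_2\mathbb Z_{\geq 0})$. Then $a_3\geq a_1$ and $a_3\geq a_2$, so $a_3+d_3\mathbb Z_{\geq 0}=(a_3+d_1\mathbb Z_{\geq 0})\cap(a_3+d_2\mathbb Z_{\geq 0})$ is contained in the intersection; conversely any element $a$ of the intersection satisfies $a\geq a_3$ and $d_1\mid a-a_3$, $d_2\mid a-a_3$, hence $a\in a_3+d_3\mathbb Z_{\geq 0}$. Everything else in your write-up (both directions of the divisibility criterion, the observation that the square-free hypothesis is irrelevant here) is correct and matches the paper.
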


\begin{proof} Suppose first that there exists $a\in a_1+d_1\mathbb Z_{\geq 0} \cap a_2+d_2\mathbb Z_{\geq 0}$. Then $a=a_1+n_1d_1=a_2+n_2d_2$. Therefore $a_1-a_2=n_2d_2-n_1d_1,$ so $gcd(d_1,d_2)$ divides $a_1-a_2.$ In the other direction, suppose $gcd(d_1,d_2)$ divides $a_1-a_2$. Then by Bezout Lemma, there exist integers $n_1$ and $n_2$ such that $a_1-a_2=n_2d_2-n_1d_1$. This implies $a_1+n_1d_1=a_2+n_2d_2,$ By replacing the pair $(n_1, n_2)$ by $(n_1+kd_2,n_2+kd_1)$ for large enough $k$ we get that the number above is positive, so the two arithmetic progressions intersect.

To prove that the intersection is an arithmetic progression, suppose $a_3$ is the smallest element of the intersection. Then
$$a_1+d_1\mathbb Z_{\geq 0} \cap a_2+d_2\mathbb Z_{\geq 0}\supseteq  a_3+d_1\mathbb Z_{\geq 0} \cap a_3+d_2\mathbb Z_{\geq 0}=a_3+lcm(d_1,d_2) \mathbb Z_{\geq 0}.$$
On the other hand, if $a\in a_1+d_1\mathbb Z_{\geq 0} \cap a_2+d_2\mathbb Z_{\geq 0},$ then $a-a_3$ is divisible by both $d_1$ and $d_2$, thus by $lcm(d_1,d_2)$.
\end{proof}

The next lemma is undoubtedly classical, but we  have not seen it anywhere, and it seems both fundamental and somewhat counter-intuitive.

\begin{Lemma} Suppose $a\in a_1+d_1\mathbb Z_{\geq 0}$ and $b\in  a_2+d_2\mathbb Z_{\geq 0}$, where these arithmetic progressions intersect, and $a\neq b$. Then the intersection of these progressions intersects infinitely with the progression generated by $a$ and $b$ ($a+(b-a)\mathbb Z_{\geq 0}$ or $b+(a-b)\mathbb Z_{\geq 0}$).
\end{Lemma}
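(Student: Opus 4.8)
The plan is to reduce the claim to a congruence condition and then invoke Lemma~8. Write $a = a_1 + n_1 d_1$ and $b = a_2 + n_2 d_2$ with $n_1, n_2 \in \mathbb{Z}_{\geq 0}$. Since $a$ lies in $a_1 + d_1\mathbb{Z}_{\geq 0}$, the full intersection of the two given progressions can be rewritten (using Lemma~8 applied to the pair $a + d_1\mathbb{Z}_{\geq 0}$ and $b + d_2\mathbb{Z}_{\geq 0}$, whose starting points $a,b$ differ by something the relevant gcd must divide, because the original progressions already intersect) as a single arithmetic progression $a_3 + d_3 \mathbb{Z}_{\geq 0}$ with $d_3 = \mathrm{lcm}(d_1, d_2)$. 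So the task becomes: the progression $a_3 + d_3\mathbb{Z}_{\geq 0}$ meets $a + (b-a)\mathbb{Z}_{\geq 0}$ in infinitely many terms. By Lemma~8 again, these two progressions intersect in infinitely many terms as soon as they intersect in one, and they intersect if and only if $\gcd(d_3, b-a)$ divides $a_3 - a$.

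First I would establish that $a$ and $b$ both belong to $a_3 + d_3\mathbb{Z}_{\geq 0}$, hence $d_3 \mid (b - a)$; this already forces $\gcd(d_3, b-a) = d_3$. Then the divisibility condition $\gcd(d_3, b-a) \mid (a_3 - a)$ becomes $d_3 \mid (a_3 - a)$, which holds precisely because $a$ is itself an element of the progression $a_3 + d_3\mathbb{Z}_{\geq 0}$. So the two progressions do intersect, and by the infinitude part of Lemma~8 (the intersection is again a nonempty progression with common difference $\mathrm{lcm}(d_3, |b-a|)$, hence infinite), they intersect in infinitely many terms. That is exactly the assertion. I should take a moment to handle the two labelings of the progression generated by $a$ and $b$ symmetrically — whether one writes $a + (b-a)\mathbb{Z}_{\geq 0}$ or $b + (a-b)\mathbb{Z}_{\geq 0}$ depends on the sign of $b-a$ — but since $d_3$ divides $b-a$ in either case, the argument is unchanged; only the base point of the generated progression differs.

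The only genuinely delicate point is bookkeeping with $\mathbb{Z}_{\geq 0}$ rather than $\mathbb{Z}$: one must make sure that "infinitely many terms" really survives the restriction to nonnegative indices. This is not an obstacle in substance — any nonempty progression $c + e\mathbb{Z}_{\geq 0}$ with $e \geq 1$ is automatically infinite — but it is where one has to be careful that the intersection one has identified is genuinely nonempty as a subset of $\mathbb{N}$, not just as a subset of $\mathbb{Z}$. Here the fact that $a$ (and $b$) are themselves common elements of all the progressions in play does the work: $a$ witnesses nonemptiness of the intersection of $a_3 + d_3\mathbb{Z}_{\geq 0}$ with the progression generated by $a$ and $b$, and then Lemma~8 upgrades this single witness to an infinite subprogression. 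So the proof is essentially a triple application of Lemma~8, once to simplify the given intersection and twice (existence, then infinitude) to conclude.
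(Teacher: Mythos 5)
Your reduction to Lemma 8 is the right strategy, but the pivotal step is false: you assert that $a$ and $b$ both belong to the intersection $a_3+d_3\mathbb Z_{\geq 0}$ of the two given progressions, and hence that $d_3=\mathrm{lcm}(d_1,d_2)$ divides $b-a$. The hypothesis only places $a$ in the \emph{first} progression and $b$ in the \emph{second}; neither need lie in both. Concretely, take $a_1+d_1\mathbb Z_{\geq 0}=1+2\mathbb Z_{\geq 0}$ and $a_2+d_2\mathbb Z_{\geq 0}=1+3\mathbb Z_{\geq 0}$, whose intersection is $1+6\mathbb Z_{\geq 0}$, and take $a=3$, $b=4$. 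Then $d_3=6$ does not divide $b-a=1$, and neither $a$ nor $b$ lies in $1+6\mathbb Z_{\geq 0}$. (The conclusion of the lemma still holds in this example --- $3+\mathbb Z_{\geq 0}$ meets $1+6\mathbb Z_{\geq 0}$ infinitely --- but not for your reason.) Since everything downstream of ``$d_3\mid(b-a)$'' depends on that claim, the argument does not go through; this is exactly the point that makes the lemma ``somewhat counter-intuitive.'' Rewriting the progressions with base points $a$ and $b$ does not rescue the step, since $a$ still need not lie in $b+d_2\mathbb Z_{\geq 0}$.

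What is actually true, and what the paper's proof uses, is weaker but sufficient. Writing $a_3$ for the smallest element of the intersection, one has $a_3\equiv a\pmod{d_1}$ and $a_3\equiv b\pmod{d_2}$, so $a_3-a$ is divisible by $d_1$ and also by $\gcd(d_2,\,b-a)$ (because $a_3-a=(a_3-b)+(b-a)$), hence by $\mathrm{lcm}\bigl(d_1,\gcd(d_2,b-a)\bigr)$. Using that $\gcd(d_1,d_2)$ divides $b-a$ (which \emph{does} follow from the intersection hypothesis), a prime-by-prime comparison of valuations shows that this quantity is divisible by $\gcd\bigl(\mathrm{lcm}(d_1,d_2),\,b-a\bigr)=\gcd(d_3,b-a)$, and that is precisely the criterion from Lemma 8 for $a_3+d_3\mathbb Z_{\geq 0}$ to meet the progression generated by $a$ and $b$; any nonempty intersection of two infinite progressions is again an infinite progression. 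Replace your claimed divisibility $d_3\mid(b-a)$ with this congruence computation and the rest of your outline works.
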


\begin{proof} Suppose $a=a_1+m_1d_1$ and $b=a_2+m_2d_2$. We can assume that $a<b.$ From the previous lemma, $gcd(d_1,d_2)$ divides $a_1-a_2$. So it also divides $b-a$. If the intersection of the progressions is $a_3+d_3\mathbb Z_{\geq 0},$ we get that $a_3=a+n_1d_1=b+n_2d_2$ for some integer $n_1$ and $n_2.$ So  $a_3-a$ is a multiple of $d_1.$ It also equals $a_3-b+(b-a)=n_2d_2+(b-a)$, thus it is a multiple of $gcd(b-a, d_2).$ So it is a multiple of $gcd(b-a, lcm(d_1,d_2)).$  The result now follows from the previous lemma.
\end{proof}

\begin{Corollary} Suppose $U_1$ and $U_2$ are intersecting infinite arithmetic progressions in $\mathbb N.$ Suppose $f\colon U_1\cup U_2 \to \mathbb Z$ is a function, whose restrictions to $U_1$ and $U_2$ are LIP. Then $f\in LIP(U_1\cup U_2)$. 
\end{Corollary}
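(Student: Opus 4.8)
The plan is to obtain this as an immediate consequence of Theorem~9, applied to the covering of $U_1\cup U_2$ by the two-element family $\{U_1,U_2\}$. The only thing one needs to check is that this family satisfies the intersection hypothesis of that theorem: for all $\alpha_1,\alpha_2\in\{1,2\}$ and all distinct $a_1\in U_{\alpha_1}$, $a_2\in U_{\alpha_2}$, the set $U_{\alpha_1}\cap U_{\alpha_2}$ contains infinitely many terms of the arithmetic progression $a_1+(a_2-a_1)\mathbb Z$.

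I would dispose of the degenerate case $\alpha_1=\alpha_2$ first: here $U_{\alpha_1}$ is an arithmetic progression of some common difference $d$, and since $a_1$ and $a_2$ both belong to it, $a_2-a_1$ is a nonzero multiple of $d$; hence $a_1+(a_2-a_1)\mathbb Z_{\geq 0}$ is an infinite sub-progression of $U_{\alpha_1}=U_{\alpha_1}\cap U_{\alpha_2}$, which more than suffices. The remaining case $\{\alpha_1,\alpha_2\}=\{1,2\}$ is exactly the content of the lemma immediately preceding this corollary: since $U_1$ and $U_2$ intersect, their intersection is itself an infinite arithmetic progression (by the earlier intersection lemma for progressions, with common difference $\mathrm{lcm}(d_1,d_2)$), and that preceding lemma asserts that it meets the progression generated by $a_1$ and $a_2$ in infinitely many points. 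As $a_1+(a_2-a_1)\mathbb Z_{\geq 0}\subseteq a_1+(a_2-a_1)\mathbb Z$, this is precisely the hypothesis required.

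Once the hypothesis is verified, Theorem~9 gives that $f\in LIP(U_1\cup U_2)$ if and only if the restrictions of $f$ to $U_1$ and to $U_2$ are both LIP; the latter holds by assumption, so the proof is complete.

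I do not expect a real obstacle here: all the substantive work has already been done in Theorem~9 and in the two preceding lemmas on arithmetic progressions. The only points that call for a moment's attention are bookkeeping ones: separating out the trivial case $\alpha_1=\alpha_2$, and observing that the one-sided progression delivered by the progression lemma (we are working inside $\mathbb N$) is contained in the two-sided progression appearing in Theorem~9, so that "infinitely many terms" transfers without loss.
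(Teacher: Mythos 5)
Your proposal is correct and follows exactly the paper's route: the paper's proof is the one-line "combination of the previous lemma and Theorem 9," and your verification of the intersection hypothesis (including the degenerate case $\alpha_1=\alpha_2$) just makes that combination explicit.
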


\begin{proof} This is a combination of the previous lemma and Theorem 9.
\end{proof}

One would like to extend the above corollary to prove that every locally LIP function on a connected Kirch-open subset of $\mathbb N$ is LIP. However, it is unclear if this is true.  As a weaker version, one can hope that this is true for all basic arithmetic progressions, or, at least, for $\mathbb N$ itself. Most generally, we would like to know for what connected Kirch-open sets $U$ every locally LIP function on $U$ is LIP on $U$. At the moment, this question is wide open: we have no set $U$ for which the answer, positive or negative, is known.

\vskip20pt\noindent {\bf Acknowledgments.} The author thanks Sayak Sengupta for providing initial motivation for this work and for  helpful comments on a preliminary draft of this paper. The author also thanks Levi Axelrod whose question led him to the Golomb topology, that gave significant further motivation to study LIP functions on infinite subsets. 


\begin{thebibliography}{99}

\bibitem{AAZ} Daniel D. Anderson; David Fenimore Anderson; Muhammad Zafrullah.  Factorization in integral domains. {\it J. Pure Appl. Algebra} {\bf 69} (1990), no. 1, 1--19.

\bibitem {BPR} Matthew Baker; Sam Payne;  Joseph Rabinoff. Nonarchimedean geometry, tropicalization, and metrics on curves. {\it Algebr. Geom.} {\bf 3} (2016), no. 1, 63--105.

\bibitem{Berkovich} Vladimir Berkovich. Spectral theory and analytic geometry over non-Archimedean fields. {\it Mathematical Surveys and Monographs}, {\bf 33}. American Mathematical Society, Providence, RI, 1990. x+169 pp. ISBN: 0-8218-1534-2.

\bibitem{Clarketal} Pete L. Clark; Noah Lebowitz-Lockard; Paul Pollack. A note on Golomb topologies. {\it Quaest. Math.} {\bf 42} (2019), no. 1, 73--86.

\bibitem{GelfandManin} Sergei I. Gelfand; Yuri I. Manin. Methods of homological algebra. Second edition. Springer Monographs in Mathematics. Springer-Verlag, Berlin, 2003. xx+372 pp. ISBN: 3-540-43583-2

\bibitem{Golomb} Solomon W. Golomb. A connected topology for the integers. {\it Amer. Math. Monthly} {\bf 66} (1959), 663--665.
 
\bibitem{JaiungJun} Jaiung Jun. Algebraic geometry over hyperrings. {\it Adv. Math.} {\bf 323} (2018), 142--192.

\bibitem{Kirch}A.M. Kirch. A countable, connected, locally connected Hausdorff space.
{\it Amer. Math. Monthly} {\bf 76} (1969), 169--171

\bibitem{LMS} Ignazio Longhi; Yunzhu Mu; Francesco Maria Saettone. Coset topologies on Z and arithmetic applications. {\it Expo. Math.} {\bf 41} (2023), no. 1, 71--114.

\bibitem{Lorscheid} Oliver Lorscheid. The geometry of blueprints: Part I: Algebraic background and scheme theory. {\it Adv. Math.} {\bf 229} (2012), no. 3, 1804--1846.

\bibitem{Sengupta1} Sayak Sengupta. Locally nilpotent polynomials over $\mathbb Z$, {\it INTEGERS}, {\bf 24} (2024), A13. Web address: http://math.colgate.edu/~integers/y13/y13.pdf

\bibitem{Sengupta2} Sayak Sengupta, Nilpotent polynomials over $\mathbb Z$, preprint (2024).  Arxiv version: https://arxiv.org/abs/2401.01435

\bibitem{Szczuka} P. Szczuka. The connectedness of arithmetic progressions in Furstenberg’s, Golomb’s, and Kirch’s topologies, {\it Demonstratio Math.} {\bf 43} (4) (2010), 899--909.

\end{thebibliography}
\end{document}